\newcounter{plm}
\newtheorem{theorem}[plm]{Theorem}
\newtheorem{lemma}[plm]{Lemma}
\newtheorem{remark}[plm]{Remark}
\newtheorem{proposition}[plm]{Proposition}
\begin{document}
\def\spacingset#1{\def\baselinestretch{#1}\small\normalsize}

\title{
Optimal estimation with missing observations\\ via balanced time-symmetric stochastic models
\thanks{Research supported by grants from AFOSR, NSF, VR, and the SSF.}}

\author{Tryphon T. Georgiou,~\IEEEmembership{Fellow,~IEEE} and Anders Lindquist,~\IEEEmembership{Life Fellow,~IEEE}
\thanks{T.T.\ Georgiou is with the Department of Electrical \& Computer Engineering,
University of Minnesota, Minneapolis, Minnesota; {email: tryphon@umn.edu} and A.\ Lindquist is with the Department of  Automation and the Department of Mathematics, Shanghai Jiao Tong University, Shanghai, China, and the Center for Industrial and Applied Mathematics and ACCESS Linnaeus Center,
KTH Royal Institute of Technology, Stockholm, Sweden; {email: alq@kth.se}}
} 

\maketitle

\setlength{\parindent}{15pt}
\parskip 6pt
\def\spacingset#1{\def\baselinestretch{#1}\small\normalsize}
\newcommand{\E}{\operatorname{E}}
\newcommand{\bX}{\mathbf X}
\newcommand{\bH}{\mathbf H}
\newcommand{\bA}{\mathbf A}
\newcommand{\bB}{\mathbf B}
\newcommand{\bN}{\mathbf N}

\newcommand{\bHo}{{\stackrel{\circ}{\mathbf H}}}

\newcommand{\bHom}{{\stackrel{\circ\,\,}{{\mathbf H}_{t}}}{\hspace*{-13pt}\phantom{H}}^-}
\newcommand{\bHop}{{\stackrel{\circ\,\,}{{\mathbf H}_{t}}}{\hspace*{-13pt}\phantom{H}}^+}
\newcommand{\bHotwo}{{\stackrel{\circ\,\,\,\,\,}{{\mathbf H}_{t_2}}}{\hspace*{-16pt}\phantom{H}}^-}
\newcommand{\bHomn}{{\stackrel{\circ}{{\mathbf H}}}{\hspace*{-9pt}\phantom{H}}^-}
\newcommand{\bHopn}{{\stackrel{\circ}{{\mathbf H}}}{\hspace*{-9pt}\phantom{H}}^+}

\newcommand{\EbHom}{\E^{{\stackrel{\circ\,\,}{{\mathbf H}_{t}}}{\hspace*{-11pt}\phantom{H}}^-}}
\newcommand{\EbHop}{\E^{{\stackrel{\circ\,\,}{{\mathbf H}_{t}}}{\hspace*{-10pt}\phantom{H}}^+}}

\spacingset{.97}

\begin{abstract} 
We consider data fusion for the purpose of smoothing and interpolation based on observation records with missing data. Stochastic processes are generated by linear stochastic models. The paper begins by drawing a connection between time reversal in stochastic systems and all-pass extensions. A particular normalization (choice of basis) between the two time-directions allows the two to share the same {\em orthonormalized\/} state process and simplifies the mathematics of data fusion. In this framework we derive  symmetric and balanced Mayne-Fraser-like formulas that apply simultaneously to smoothing and interpolation. 
\end{abstract}


\newcommand{\mR}{{\mathbb R}}
\newcommand{\mZ}{{\mathbb Z}}
\newcommand{\mN}{{\mathbb N}}
\newcommand{\mE}{{\mathbb E}}
\newcommand{\mC}{{\mathbb C}}
\newcommand{\mD}{{\mathbb D}}
\newcommand{\bU}{{\mathbf U}}
\newcommand{\bW}{{\mathbf W}}
\newcommand{\cF}{{\mathcal F}}

\newcommand{\trace}{{\rm trace}}
\newcommand{\rank}{{\rm rank}}
\newcommand{\Real}{{\Re}e\,}
\newcommand{\half}{{\frac12}}

\spacingset{1.1}

\section{Introduction}
Data fusion is the process of integrating different data sets, or statistics, into a more accurate representation for a quantity of interest. A case in point in the context of systems and control is provided by the Mayne-Fraser two-filter formula \cite{Mayne-66,Fraser-P-69}
in which the estimates generated by two different filters are merged into a combined more reliable estimate in fixed-interval smoothing. The purpose of this paper is to develop such a two-filter formula that is universally applicable to smoothing and interpolation based on general records with missing observations.  

In \cite{Badawi-L-P-79,badawi1979mayne} the Mayne-Fraser formula was analyzed in the context of stochastic realization theory
and was shown that it can be formulated in terms a forward and a backward Kalman filter. In a subsequent series of papers, Pavon
 \cite{Pavon-84,Pavon-84a} addressed in a similar manner the hitherto challenging problem of interpolation \cite{karhunen1952interpolation,rozanovstationary,masani1971review,dym2008gaussian}. This latter problem consists of reconstructing missing values of a stochastic process over a given interval. In departure from the earlier statistical literature, \cite{Pavon-84,Pavon-84a} considered a stationary process with rational spectral density and, therefore, reliazable as the output of a linear stochastic system. Interpolation was then cast as seeking an estimate of the state process based on an incomplete observation record. A basic tool in these works is the concept of time-reversal in stochastic systems which has been central in  stochastic realization theory (see, e.g., \cite{Lindquist-P-79,Lindquist-P-85a,Lindquist-P-85,Lindquist-P-91}, \cite{Pavon-84,Pavon-84a}, \cite{Lindquist-Pa-84}, \cite{Michaletzky-B-V-98}, \cite{Michaletzky-F-95}). For a recent overview of smoothing and interpolation theory in the context of stochastic realization theory see \cite[Chapter 15]{LPbook}.
 
In the present paper we are taking this program several steps further. Given intermittent observations of the output of a linear stochastic system over a finite interval, we want to determine the linear least-squares estimate of the state of the system in an arbitrary point in the interior of the interval, which may either be in a subinterval of missing data or in one where observations are available. Hence, this combines smoothing and interpolation over general patterns of available observations. Our main interest is in continuous-time (possibly time-varying) systems. However, the absence of data over subintervals, depending on the information pattern, may necessitate a hybrid approach involving discrete-time filtering steps.

In studying the statistics of a process over an interval, it is natural to decompose the interface between past and future in a time-symmetric manner. This gives rise to systems representations of the process running in either time direction, forward or backward in time. This point was fundamental in early work in stochastic realization; see \cite{LPbook} and references therein. In a different context \cite{georgiou2007caratheodory} a certain duality between the two time-directions in modeling a stochastic process was introduced in order to characterize solutions to moment problems. In this new setting the noise-process was general (not necessarily white), and the correspondence between the driving inputs to the two time-opposite models was shown to be captured by suitable dual all-pass dynamics.

%
%

Here, we begin by combining these two sets of ideas to develop a general framework where
two time-opposite stochastic systems model a given stochastic process. We study the relationship between these systems and the corresponding processes. In particular, we recover as a special case certain results of stochastic realization theory
\cite{Lindquist-P-79}, \cite{Pavon-84,Pavon-84a}, \cite{badawi1979mayne} from the 1970's using a novel procedure.
This theory provides a normalized and balanced version of the forward-backward duality which is essential for our new formulation of the two-filter Mayne-Fraser-like formula uniformly applicable to intervals with or without observations.

The paper is structured as follows. In Section \ref{sec:allpass} we explain how a lifting of state-dynamics into an all-pass system allows direct correspondence between sample-paths of driving generating processes, in opposite time-directions, via causal and anti-causal mappings, respectively. This is most easily understood and explained in discrete-time and hence we begin with that.
In Section \ref{sec:timereversal} we utilize this mechanism in the context of general output processes and, similarly, introduce a pair of time-opposite models. 
These two introductory sections, \ref{sec:allpass} and \ref{sec:timereversal}, deal with stationary models for simplicity and are largely based on \cite{GLifac}. The corresponding generalizations to time-varying systems are given in Section \ref{sec:nonstationary} and in the appendix, in continuous and discrete-time, respectively. In Section \ref{Kalmansec} 
we explain Kalman filtering for problems with missing information in the continuous-time setting. 
In this, we first consider the case where increments of the output process across intervals of no information are unavailable as a simplified preliminary, after which we focus on the central problem where the output process is the object of observation. 
Section \ref{sec:geometryoffusion} deals with the geometry of information fusion.
In Section \ref{sec:twofilters} we present a generalized balanced two-filter formula that applies uniformly over intervals where data is or is not available. We summarize the computational steps of this approach in Section~\ref{sec:recap}. 
Finally, we highlight the use of the two-filter formula with a numerical example given in Section \ref{sec:example} and provide concluding remarks in Section \ref{sec:conclusions}.

\section{State dynamics and all-pass extension}\label{sec:allpass}

In this paper we consider discrete-time as well as continuous-time stochastic linear state-dynamics. We begin by explaining basic ideas in a stationary setting.  In discrete-time systems take the form of a set of difference equations
\begin{align}\label{eq:model_discrete}
x(t+1)=Ax(t)+Bw(t)
\end{align}
where $t\in\mZ$, $A\in\mR^{n\times n}, B\in\mR^{n\times p}$, $A$ has all eigenvalues in the open unit disc $\mD=\{z\mid |z|<1\}$, and $w(t),x(t)$ are (centered) stationary vector-valued stochastic processes with $w(t)$ normalized white noise; i.e., 
\begin{equation}
\label{eq:u}
\E\{w(t)w(s)'\}=I_p\delta_{ts}, 
\end{equation}
where $\E$ denotes mathematical expectation.
 The system of equations is assumed to be reachable, i.e.,
\begin{align}\label{eq:controllability}
\rank\left[ B,\,AB,\,\ldots A^{n-1}B\right]=n.
\end{align}

In continuous-time, state-dynamics take the form of a system of stochastic differential equations
\begin{align}\label{eq:model_continuous}
dx(t)=Ax(t)dt+Bdw(t)
\end{align}
where, here, $x(t)$ is a stationary continuous-time vector-valued stochastic process and $w(t)$ is a vector-valued process with orthogonal increments with the property
\begin{equation}
\label{eq:du_property}
\E\{dwdw'\}=I_pdt,
\end{equation}
where $I_p$ is the $p\times p$ identity matrix. 
Reachability of the pair $(A,B)$ is also assumed throughout
and the condition for this is identical to the one for discrete-time given above (as is well known). In continuous time, stability of the system of equations is equivalent to $A$ having only eigenvalues with negative real part.

In either case, discrete-time or continuous-time, it is possible to define an output equation so that the overall system is all-pass. This is done next. 

\subsection{All-pass extension in discrete-time}\label{Section21}

Consider the discrete-time Lyapunov equation
\begin{align}\label{discreteLyapunov}
P= APA^\prime + BB^\prime.
\end{align}
Since $A$ has all eigenvalues inside the unit disc of the complex plane and \eqref{eq:controllability} holds, \eqref{discreteLyapunov} has as solution a matrix $P$ which is positive definite.
The state transformation
\begin{align}\label{eq:statetransform}
\xi= P^{-\half} x,
\end{align}
and
\begin{align}
F=P^{-\half} A P^{\half},\;G=P^{-\half} B,\label{eq:similarity}
\end{align}
brings \eqref{eq:model_discrete} into
\begin{align}\label{eq:model_discrete2}
\xi(t+1)=F\xi(t)+Gw(t).
\end{align}

For this new system, the corresponding Lyapunov equation $X=FXF^\prime+GG^\prime$ has $I_n$ as solution,
where $I_n$ denotes the $(n\times n)$ identity matrix. This fact, namely, that
\begin{align}\label{eq:FF'+GG'}
I_n=FF^\prime + GG^\prime
\end{align}
implies that
this $[F,G]$ can be embedded as part of an orthogonal matrix
\begin{align}\label{eq:U}
U=\left[\begin{array}{cc} F& G\\ H& J\end{array}\right],
\end{align}
i.e., a matrix such that $UU^\prime=U^\prime U= I_{n+p}$. 

Define the transfer function
\begin{align}\label{eq:tfdiscrete}
\bU(z):=H(zI_n-F)^{-1}G+J
\end{align}
corresponding to
\begin{subequations}\label{xisystem}
\begin{align}\label{xisystemfirst}
\xi(t+1)&=F\xi(t)+Gw(t)\\
\bar w(t)&=H\xi(t)+Jw(t).
\end{align}
\end{subequations}
This is also the transfer function of
\begin{subequations}\label{xsystem}
\begin{align}
x(t+1)&=Ax(t)+Bw(t)\\
\bar w(t)&=\bar{B}' x(t)+Jw(t),
\end{align}
\end{subequations}
where $\bar{B}:=P^{-\half}H'$, since the two systems are related by a similarity transformation. Hence,
\begin{align}\label{eq:tfdiscrete2}
\bU(z)=\bar{B}'(zI_n-A)^{-1}B+J.
\end{align}
We claim that $\bU(z)$
is a stable all-pass transfer function (with respect to the unit disc), i.e.,
that $\bU(z)$ is a transfer function of a stable system and that
\begin{align}\label{eq:discreteallpass}
\bU(z)\bU(z^{-1})^\prime=\bU(z^{-1})^\prime \bU(z)=I_p.
\end{align}

The latter claim is immediate after we observe that, since $U^\prime U=I_{n+p}$,
\[
U^\prime\left[\begin{array}{c} \xi(t+1)\\ \bar w(t)\end{array}\right]=\left[\begin{array}{c} \xi(t)\\ w(t)\end{array}\right],
\]
and hence,
\begin{subequations}\label{inversexisystem}
\begin{align}
\xi(t)&=F^\prime\xi(t+1)+H^\prime \bar w(t)\\
w(t)&=G^\prime \xi(t+1)+J^\prime \bar w(t)
\end{align}
\end{subequations}
or, equivalently,
\begin{subequations}\label{inversexsystemprel}
\begin{align}
x(t)&=PA^\prime P^{-1}x(t+1)+P^{\half}H^\prime \bar w(t)\\
w(t)&=B^\prime P^{-1} x(t+1)+J^\prime \bar w(t).
\end{align}
\end{subequations}
Setting 
\begin{equation}
\label{xbar}
\bar{x}(t):=P^{-1} x(t+1),
\end{equation}
\eqref{inversexsystemprel} can be written
\begin{subequations}\label{inversexsystem}
\begin{align}\label{inversexsystema}
\bar{x}(t-1)&=A^\prime\bar{x}(t)+\bar B \bar w(t)\\
w(t)&=B^\prime \bar{x}(t)+J^\prime \bar w(t)\label{inversexsystemb}
\end{align}
\end{subequations}
with transfer function
\begin{align}\label{Ustardefinition}
\bU(z)^*=B^\prime(z^{-1}I_n-A^\prime)^{-1}\bar{B}+J^\prime.
\end{align}
Either of the above systems inverts the dynamical relation $w\to \bar w$ (in \eqref{xsystem} or \eqref{xisystem}).

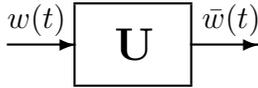
\begin{figure}[h]
\begin{center}
\setlength{\unitlength}{.006in}
\parbox{304\unitlength}
{\begin{picture}(100,90)
\thicklines
\put(26,66){\makebox(0,0){{\large $w(t)$}}}
\put(0,45){\vector(1,0){60}}
\put(60,10){\framebox(100,70){{\LARGE
\hspace*{7pt}$\bU$ 
}}}
\put(196,66){\makebox(0,0){{\large $\bar w(t)$}}}
\put(160,45){\vector(1,0){60}}
\end{picture}
}
\end{center}
\caption{Realization \eqref{xsystem} in the forward time-direction.}
\label{forwardfigure}
\end{figure}

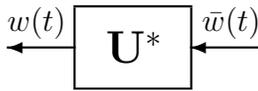
\begin{figure}[h]
\begin{center}
\setlength{\unitlength}{.006in}
\parbox{304\unitlength}
{\begin{picture}(100,90)
\thicklines
\put(26,66){\makebox(0,0){{\large $w(t)$}}}
\put(60,45){\vector(-1,0){60}}
\put(60,10){\framebox(100,70){{\LARGE
\hspace*{7pt}$\bU^*$ 
}}}
\put(196,66){\makebox(0,0){{\large $\bar w(t)$}}}
\put(220,45){\vector(-1,0){60}}
\end{picture}
}
\end{center}
\caption{Realization \eqref{inversexsystem} in the backward time-direction.}
\label{backwardfigure}
\end{figure}

An algebraic proof of \eqref{eq:discreteallpass} is also quite immediate. In fact,
\begin{align*}
&\bU(z)\bU(z^{-1})'\\ =& \left[H(zI_n-F)^{-1}G+J\right]\left[H(z^{-1}I_n-F)^{-1}G+J\right]'  \\
   =&H(zI_n-F)^{-1}GG'(z^{-1}I_n-F')^{-1}H' + JJ'\\
   &+H(zI_n-F)^{-1}GJ' +JG'(z^{-1}I_n-F')^{-1}H
\end{align*}
Now, using the identity
\begin{align*}
\label{}
I_n-FF'   &=(zI_n-F)(z^{-1}I_n-F')  \\
              &+ (zI_n-F)F' +F(z^{-1}I_n-F'),
\end{align*}
\eqref{eq:FF'+GG'} and $GJ'=-FH'$, obtained from $UU'=I_{n+p}$, this yields
\begin{displaymath}
\bU(z)\bU(z^{-1})'=HH'+JJ'=I_{n+p},
\end{displaymath}
as claimed.

\subsection{All-pass extension in continuous-time}\label{Section22}

Consider the continuous-time Lyapunov equation
\begin{align}\label{continuousLyapunov}
AP+PA^\prime + BB^\prime =0.
\end{align}
Since $A$ has all its eigenvalues in the left half of the complex plane and since \eqref{eq:controllability} holds, \eqref{continuousLyapunov} has as solution a positive definite matrix $P$.
Once again, applying (\ref{eq:statetransform}-\ref{eq:similarity}),
the system in \eqref{eq:model_continuous}
becomes
\begin{subequations}
\begin{align}\label{eq:model_continuous2}
d\xi(t)=F\xi(t)dt+Gdw(t).
\end{align}
We now seek a completion by adding an output equation
\begin{align}\label{eq:model_output2}
d\bar w(t)=H\xi(t)dt+Jdw(t)
\end{align}
\end{subequations}
so that the transfer function
\begin{align}
\bU(s):=H(sI_n-F)^{-1}G + J
\end{align}
is all-pass
(with respect to the imaginary axis), i.e.,
\begin{align}
\bU(s)\bU(-s)^\prime = \bU(-s)^\prime\bU(s)=I_p.
\end{align}

For this new system, the corresponding Lyapunov equation has as solution the identity matrix and hence,
\begin{align}\label{continuousLyapunov2}
F+F^\prime + GG^\prime =0.
\end{align}
Utilizing this relationship we
note that
\begin{align*}
&(sI_n-F)^{-1}GG^\prime (-sI_n-F^\prime)^{-1}\\
&=(sI_n-F)^{-1}(sI_n-F -sI_n-F^\prime)(-sI_n-F^\prime)^{-1}\\
&=(sI_n-F)^{-1}+(-sI_n-F^\prime)^{-1},
\end{align*}
and we calculate that
\begin{align*}
&\bU(s)\bU(-s)^\prime\\
&= (H (sI_n-F)^{-1}G+J)(G^\prime(-sI_n-F^\prime)^{-1}H^\prime +J^\prime)\\
&=JJ^\prime +H(sI_n-F)^{-1}(GJ^\prime+H^\prime)\\
&\hspace*{1cm}(JG^\prime + H)(-sI_n-F^\prime)^{-1}H^\prime.
\end{align*}
For the product to equal the identity,
\begin{align*}
&JJ^\prime=I_p\\
&H=-JG^\prime.
\end{align*}
Thus, we may take
\begin{align*}
&J=I_p\\
&H=-G^\prime,
\end{align*}
and the forward dynamics
\begin{subequations}\label{eq:continuousforward}
\begin{align}\label{eq:model_continuous2a}
d\xi(t)=F\xi(t)dt+Gdw(t)\phantom{x}\\
\label{eq:model_output2a}
d\bar w(t)=-G^\prime\xi(t)dt+dw(t).
\end{align}
\end{subequations}
Substituting $F=-F^\prime-GG^\prime$ from \eqref{continuousLyapunov2} into \eqref{eq:model_continuous2a} we obtain the reverse-time dynamics
\begin{subequations}\label{xibackward}
\begin{align}\label{eq:model_continuous2a_reverse}
d\xi(t)=-F^\prime\xi(t)dt+Gd\bar w(t)\\
\label{eq:model_output2a_reverse}
dw(t)=G^\prime\xi(t)dt+d\bar w(t).\phantom{xll}
\end{align}
\end{subequations}
Now defining 
\begin{equation}
\label{xbarcont}
\bar{x}(t):=P^{-1}x(t)
\end{equation}
and using \eqref{eq:statetransform} and \eqref{eq:similarity}, \eqref{xibackward} becomes
\begin{subequations}\label{cbackward}
\begin{align}
d\bar{x}(t)=-A^\prime\bar{x}(t)dt+\bar{B}d\bar w(t)\\
dw(t)=B^\prime\bar{x}(t)dt+d\bar w(t),\phantom{xx}\label{ubar2ucont}
\end{align}
\end{subequations}
with transfer function
\begin{align}\label{Ustardefinition_continuous}
\bU(s)^*=I_p+B^\prime(sI_n+A^\prime)^{-1}\bar{B},
\end{align}
where 
\begin{equation}
\label{Bbarcont}
\bar{B}:=P^{-1}B.
\end{equation}
Furthermore, the forward dynamics \eqref{eq:continuousforward} can be expressed in the form
\begin{subequations}\label{cforward}
\begin{align}
dx(t)=Ax(t)dt+Bdw(t)\\
d\bar w(t)=\bar B^\prime x(t)dt+d w(t)\phantom{b}\label{cforwardb}
\end{align}
\end{subequations}
with transfer function
\begin{align}\label{Udefinition_continuous}
\bU(s)=I_p-\bar B^\prime(sI_n-A)^{-1}B.
\end{align}

\section{Time-reversal of stationary linear stochastic systems}\label{sec:timereversal}

The development so far allows us to draw a connection between two linear stochastic systems having the same output and driven by a pair of arbitrary, but dual, stationary processes $w(t)$ and $\bar w(t)$, one evolving forward in time and one evolving backward in time. When one of these two processes is white noise (or, orthogonal increment process, in continuous-time),
then so is the other. For this special case we recover
results of \cite{Lindquist-P-79} and \cite{Pavon-84,Pavon-84a}
in stochastic realization theory.

\subsection{Time-reversal of discrete-time stochastic systems}

Consider a stochastic linear system
\begin{subequations}\label{dsystforward}
\begin{align}
&x(t+1)=Ax(t)+Bw(t) \label{dsystforwarda}\\
&\phantom{xxll}y(t)=Cx(t)+Dw(t) \label{dsystforwardb}
\end{align}
\end{subequations}
with an $m$-dimensional output process $y$, and $x,u,A,B$ are defined as in Section \ref{Section21}.
All processes are stationary and the system can be thought as evolving forward in time from the remote past ($t=-\infty$). 

To formalize this, we introduce some notation. 
Let $\bH$ be the Hilbert space spanned by  $\{w_k(t);\, t\in\mathbb{Z},\, k=1,2,\dots,n\}$, endowed with the inner product $\langle\lambda,\mu\rangle =\E\{\lambda\mu\}$, and let $\bH_t^-(w)$ and $\bH_t^+(w)$ be the (closed) subspaces spanned by  $\{w_k(s);\, s\leq t-1,\, k=1,\dots,m\}$ and $\{w_k(s);\, s\geq t,\, k=1,\dots,m\}$, respectively. Define $\bH_t^-(y)$ and $\bH_t^+(y)$ accordingly  in terms of the output process process $y$. Then the stochastic system \eqref{dsystforward} evolves forward in time in the sense that 
\begin{equation}
\label{forward}
\bH_t^-(z)\subset \bH_t^-(w)\perp\bH_t^+(w) ,
\end{equation} 
where $\bA\perp\bB$ means that elements of the subspaces $\bA$ and $\bB$  are mutually orthogonal, and where $\bH_t^-(z)$ is formed as above in terms of
\begin{displaymath}
z(t)=\begin{bmatrix} x(t+1)\\y(t)\end{bmatrix};
\end{displaymath}
see \cite[Chapter 6]{LPbook} for more details. 

Next we construct a stochastic system
\begin{subequations}\label{dsystbackward}
\begin{align}
&\bar{x}(t-1)=A'\bar{x}(t)+\bar{B}\bar{w}(t) \label{dsystbackwarda}\\
&\phantom{xxx}y(t)=\bar{C}\bar{x}(t)+ \bar{D}\bar{w}(t),\label{dsystbackwardb}
\end{align}
\end{subequations}
which evolves backward in time from the remote future ($t=\infty$) in the sense that
the processes $\bar x,x,\bar w,w$ relate as in the previous section. More specifically, as shown in Section \ref{Section21}, $\bH^-(\bar{w})\subset\bH^-(w)$ and 
$\bH^+(w)\subset\bH^+(\bar{w})$ for all $t$, as examplified in Figures \ref{forwardfigure} and \ref{backwardfigure}.

In fact, the all-pass extension \eqref{xsystem} of \eqref{dsystforwarda} yields 
\begin{equation}
\label{w2wbar}
\bar{w}(t)  =  \bar{B}'x(t)+Jw(t)
\end{equation}
It follows from \eqref{inversexsystemb} that \eqref{w2wbar} can be inverted to yield
\begin{equation}
\label{wbar2w}
w(t)=B'\bar{x}(t)+J'\bar{w}(t), 
\end{equation}
where $\bar{x}(t)=P^{-1}x(t+1)$, and that we have the reverse-time recursion
\begin{subequations}\label{backwardagain}
\begin{equation}
\label{xtminus1}
\bar{x}(t-1)=A'\bar{x}(t)+\bar{B}\bar{w}(t).
\end{equation}
Then inserting \eqref{wbar2w} and 
\begin{displaymath}
x(t)=P\bar{x}(t-1)=PA'\bar{x}(t)+P\bar{B}\bar{w}(t)
\end{displaymath}
into \eqref{dsystforwardb}, we obtain 
\begin{equation}
y(t)=\bar{C}\bar{x}(t)+ \bar{D}\bar{w}(t),
\end{equation}
\end{subequations}
where $\bar{D}:=CP\bar{B}+DJ'$ and
\begin{equation}
\label{Cbar}
\bar{C}:= CPA' +DB' .
\end{equation}
Then, \eqref{backwardagain} is precisely what we wanted to establish.

The white noise $w$ is normalized in the sense of \eqref{eq:u}. Since $\mathbf{U}$, given by \eqref{eq:tfdiscrete2}, is all-pass, $\bar{w}$ is also a normalized white noise process, i.e.,
\[
\E\{\bar{w}(t)\bar{w}(s)'\}=I_p\delta_{t-s}.
\]
From the reverse-time recursion \eqref{dsystbackwarda}
\[
\bar x(t)=\sum_{k=t+1}^\infty (A')^{k-(t+1)}\bar B \bar w(k).
\]
Since, $\bar w$ is a white noise process, $\E\{\bar x(t)\bar w(s)'\}=0$ for all $s\leq t$.
Consequently, \eqref{dsystbackward} is a backward stochastic realization in the sense defined above.

Moreover, the transfer functions
\begin{align}\bW(z)=C(zI_n-A)^{-1}B+D
\end{align}
of \eqref{dsystforward}
and
\begin{align}
\bar\bW(z)=\bar C(z^{-1}I_n-A')^{-1}\bar B+\bar D
\end{align}
of \eqref{dsystbackward} satisfy
\begin{align}
\bW(z)=\bar\bW(z)\bU(z).
\end{align}
In the context of stochastic realization theory,  $\bU(z)$ is called {\em structural function} (\cite{Lindquist-P-85,Lindquist-P-91}).

\begin{figure}[h]
\begin{center}
\setlength{\unitlength}{.006in}
\parbox{304\unitlength}
{\begin{picture}(100,90)
\thicklines
\put(26,66){\makebox(0,0){{\large $w(t)$}}}
\put(0,45){\vector(1,0){60}}
\put(60,10){\framebox(100,70){{\LARGE
\hspace*{7pt}$\bW$ 
}}}
\put(196,66){\makebox(0,0){{\large $y(t)$}}}
\put(160,45){\vector(1,0){60}}
\end{picture}
}
\end{center}
\caption{The forward stochastic system \eqref{dsystforward}.}
\label{forwardfigure2}
\end{figure}
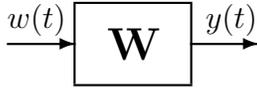
\begin{figure}[h]
\begin{center}
\setlength{\unitlength}{.006in}
\parbox{304\unitlength}
{\begin{picture}(100,90)
\thicklines
\put(26,66){\makebox(0,0){{\large $y(t)$}}}
\put(60,45){\vector(-1,0){60}}
\put(60,10){\framebox(100,70){{\LARGE
\hspace*{7pt}$\bar\bW$ 
}}}
\put(196,66){\makebox(0,0){{\large $\bar w(t)$}}}
\put(220,45){\vector(-1,0){60}}
\end{picture}
}
\end{center}
\caption{The backward stochastic system \eqref{dsystbackward}}
\label{backwardfigure2}
\end{figure}
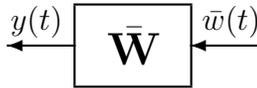

\subsection{Time-reversal of continuous-time stochastic systems}

We now turn to the continuous-time case. Let 
\begin{subequations}\label{csystforward}
\begin{align}
&dx=Axdt+Bdw \label{csystforwarda}\\
&dy=Cxdt+Ddw  \label{csystforwardb}
\end{align}
\end{subequations}
be a stochastic system with $x,w,A,B$ as in Section \ref{Section22}, evolving forward in time from the remote past ($t=-\infty$). Now let 
$\bH$ be the Hilbert space spanned by  the {\em increments\/} of the components of $w$ on the real line $\mathbb{R}$, endowed with the same inner product as above, and let $\bH_t^-(du)$ and $\bH_t^+(du)$ be the (closed) subspaces spanned by the increments of the components of $U$ on $(-\infty,t]$ and $[t,\infty)$, respectively.  Define  $\bH_t^-(dy)$ and $\bH_t^+(dy)$ accordingly in terms of the output process $y$. 
All processes have stationary increments and the stochastic system \eqref{csystforward} evolves forward in time in the sense that 
\begin{equation}
\label{contforward}
\bH_t^-(dz)\subset \bH_t^-(dw)\perp\bH_t^+(dw) ,
\end{equation} 
where $\bH_t^-(dz)$ is formed in terms of
\begin{equation}
\label{eq:xy2z}
z(t)=\begin{bmatrix} x(t)\\y(t)\end{bmatrix}.
\end{equation}

The all-pass extension of Section \ref{Section22} yields
\begin{equation}
d\bar{w}=dw -\bar{B}'xdt
\end{equation}
as well as the reverse-time relation
\begin{subequations}\label{cbackward2}
\begin{align}
d\bar{x}=-A^\prime\bar{x}dt+\bar{B}d\bar w\\
dw=B^\prime\bar{x}dt+d\bar w,\phantom{eq:xy2z}\label{ubar2ucont2}
\end{align}
\end{subequations}
where $\bar{x}(t)=P^{-1}x(t)$. Inserting \eqref{ubar2ucont2} into 
\begin{displaymath}
dy=CP\bar{x}dt+Ddw
\end{displaymath}
yields
\begin{displaymath}
dy=\bar{C}\bar{x}dt+Dd\bar{w},
\end{displaymath}
where
\begin{equation}
\bar{C}=CP+DB'.
\end{equation}
Thus, the reverse-time system is
\begin{subequations}\label{csystbackward}
\begin{align}
&d\bar{x}=-A'\bar{x}dt+\bar{B}d\bar{w} \label{csystbackwarda}\\
&dy=\bar{C}\bar{x}dt+Dd\bar{w}. \label{csystbackwardb}
\end{align}
\end{subequations}
From this, we deduce that the system \eqref{csystforward} has the backward property
\begin{equation}
\label{contbackward}
\bH_t^+(d\bar z)\subset \bH_t^+(d\bar w)\perp \bH_t^-(d\bar w),
\end{equation} 
where $\bH_t^+(d\bar z)$ is formed as above in terms of
\begin{displaymath}
\bar z(t)=\begin{bmatrix} \bar x(t)\\y(t)\end{bmatrix}.
\end{displaymath}
We also note that the transfer function
\[
\bW(s)=C(sI_n-A)^{-1}B+D
\]
of \eqref{csystforward} and the transfer function
\[
\bar\bW(s)=\bar C(sI_n+A')^{-1}\bar B+D
\]
of \eqref{csystbackward} also satisfy
\[
\bW(s)=\bar\bW(s)\bU(s)
\]
as in discrete-time.

Note that the orthogonal-increment process $w$ is  normalized in the sense of \eqref{eq:du_property}.
Since $\bU(s)$ is all-pass,  
\begin{equation}
d\bar{w}=du -\bar{B}'xdt
\end{equation}
also defines a stationary orthogonal-increment process $\bar w$ such that 
\[
\{ d\bar{w}(t)d\bar{w}(t)'\}=I_pdt. 
\]
It remains to show that \eqref{csystbackward}
is a backward stochastic realization, that is, at each time $t$ the past increments of $\bar w$ are orthogonal to $\bar x(t)$.
But this follows from the fact that
\[
\bar x(t)=\int_t^\infty e^{-A' (t-s)}\bar Bd\bar w(s)
\]
and $\bar w$ has orthogonal increments.

\section{Time reversal of non-stationary stochastic systems}\label{sec:nonstationary}

In a similar manner non-stationary stochastic systems admit unitary extensions which in turn allows us to construct dual time-reversed stochastic models that share the same state process. 
The case of discrete-time dynamics is documented in the appendix, whereas the continuous-time counterpart is explained next as prelude to smoothing and interpolation that will follow.

\subsection{Unitary extension} 

The  covariance matrix function $P(t):=\E\{x(t)x(t)'\}$ of the time-varying state representation
\begin{equation}\label{eq:xdiffeq}
dx=A(t)x(t)dt +B(t)dw, \quad x(0)=x_0
\end{equation}
with $x_0$ a zero-mean stochastic vector with covariance matrix $P_0=\E\{x_0x_0'\}$,
satisfies the matrix-valued differential equation
\begin{equation}
\label{eq:Lyapunovdifferentialeq}
\dot{P}(t)=A(t)P(t)+P(t)A(t)'+B(t)B(t)'  
\end{equation}
with $P(0)=P_0$.
Throughout we assume total reachability \cite[Section 15.2]{LPbook}, and therefore $P(t)>0$ for all $t>0$.  

A unitary extension of  \eqref{eq:xdiffeq} is somewhat more complicated than in the discrete time case. 
In fact, differentiating 
\begin{align}\label{eq:cont_statetransform_nonstationary}
\xi(t)= P(t)^{-\half} x(t)
\end{align}
we obtain
\begin{equation}
\label{eq:xicont}
d\xi =F(t)\xi(t)dt +G(t)dw,
\end{equation}
where 
\begin{subequations}\label{eq:AB2FG_cont}
\begin{align}
F(t)&=P(t)^{-\half} A(t) P(t)^{\half}+R(t),\\G(t)&=P(t)^{-\half} B(t)
\end{align}
\end{subequations}
with 
\begin{equation}
\label{eq:R}
R(t)=\left[\frac{d\phantom{t}}{dt}P(t)^{-\half}\right]P(t)^{\half}.
\end{equation}
In fact, 
\begin{equation}
\label{ }
d\xi=P(t)^{-\half}dx+R(t)\xi(t)dt.
\end{equation}
Differentiating $P(t)^{-\half}P(t)P(t)^{-\half}=I_n$, we obtain
\begin{displaymath}
P(t)^{-\half}\dot{P}P(t)^{-\half}=-R(t)-R(t)', 
\end{displaymath}
and hence the \eqref{eq:Lyapunovdifferentialeq} yields
\begin{equation}
\label{eq:F+F'+GG'=0}
F(t)+F(t)'+G(t)G(t)'=0.
\end{equation}
Using \eqref{eq:F+F'+GG'=0} to eliminate $F$ in \eqref{eq:xicont}, we obtain
\begin{equation}
\label{eq:xicontbackward}
d\xi =-F(t)'\xi(t)dt +G(t)d\bar{w},
\end{equation}
where
\begin{equation}
\label{eq:dubar}
d\bar{w}=dw-G(t)'\xi(t)dt,
\end{equation}
which can also be written
\begin{equation}
\label{eq:dubaralt}
d\bar{w}=dw-\bar{B}(t)'x(t)dt,
\end{equation}
where $\bar{B}(t):=P(t)^{-1}B(t)$. 

\begin{proposition}\label{prop:dubar_properties}
A process $\bar{w}$ satisfying \eqref{eq:dubar}  has orthogonal increments with the normalized property \eqref{eq:du_property}. Moreover, 
\begin{equation}
\label{eq:dubar_xi_orthogonality}
\E\{[\bar{w}(t)-\bar{w}(s)]\xi(t)'\}=0
\end{equation}
for all $s\leq t$. 
\end{proposition}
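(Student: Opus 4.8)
The plan is to establish the three assertions in the order normalization, the mixed orthogonality \eqref{eq:dubar_xi_orthogonality}, and finally orthogonality of the increments, with the second serving as the engine for the third. The normalization is the cheapest step: by \eqref{eq:dubar} the process $\bar w$ differs from $w$ only by the finite-variation drift $-G(t)'\xi(t)dt$ (equivalently $-\bar B(t)'x(t)dt$, cf. \eqref{eq:dubaralt}). A finite-variation term carries no quadratic variation, so the It\^o covariation of $d\bar w$ coincides with that of $dw$; expanding $d\bar w\,d\bar w'$ and discarding the $dt\cdot dw$ and $dt\cdot dt$ contributions, which are $o(dt)$, leaves $dw\,dw'=I_p\,dt$, i.e. \eqref{eq:du_property} holds for $\bar w$.

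For \eqref{eq:dubar_xi_orthogonality} I would fix $s$, set $\phi(t):=\E\{[\bar w(t)-\bar w(s)]\xi(t)'\}$ for $t\ge s$, and differentiate via the It\^o product rule using the forward dynamics \eqref{eq:xicont} and the definition \eqref{eq:dubar}. Three facts make the computation collapse: (i) $\E\{\xi(t)\xi(t)'\}=I_n$, immediate from $\xi=P^{-\half}x$ and $P=\E\{xx'\}$; (ii) adaptedness, namely $\E\{dw(t)\,\xi(t)'\}=0$ and $\E\{[\bar w(t)-\bar w(s)]\,dw(t)'\}=0$, since $\xi(t)$ and the increment accumulated up to $t$ lie in the strict past while $dw(t)$ is a future increment; and (iii) the It\^o cross term $d\bar w\,d\xi'=G(t)'\,dt$, arising from $dw\,dw'G'$. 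Upon taking expectations, the drift contribution $-G(t)'\E\{\xi\xi'\}dt=-G(t)'dt$ cancels the It\^o term $+G(t)'dt$ exactly, leaving the homogeneous linear equation $\dot\phi(t)=\phi(t)F(t)'$ with $\phi(s)=0$; by uniqueness $\phi\equiv 0$, which is \eqref{eq:dubar_xi_orthogonality}.

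For orthogonality of increments, take disjoint intervals with $s_1<t_1\le s_2<t_2$ and write $\Delta:=\bar w(t_1)-\bar w(s_1)$. I would first upgrade \eqref{eq:dubar_xi_orthogonality} to $\E\{\Delta\,\xi(\tau)'\}=0$ for every $\tau\ge t_1$: the function $\psi(\tau):=\E\{\Delta\,\xi(\tau)'\}$ satisfies, by the same product-rule computation applied now only to \eqref{eq:xicont}, the relation $\dot\psi(\tau)=\psi(\tau)F(\tau)'$ (the $dw(\tau)$-term drops by adaptedness, as $\Delta$ is past-measurable relative to $\tau\ge t_1$), with initial value $\psi(t_1)=0$ from the previous step; hence $\psi\equiv 0$. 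Substituting $\bar w(t_2)-\bar w(s_2)=\int_{s_2}^{t_2}[dw-G'\xi\,d\tau]$ into $\E\{\Delta[\bar w(t_2)-\bar w(s_2)]'\}$, the stochastic-integral part vanishes by adaptedness and the drift part vanishes by the upgraded orthogonality, giving $\E\{\Delta[\bar w(t_2)-\bar w(s_2)]'\}=0$, which is orthogonality of increments.

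The main obstacle is precisely the coupling of $\bar w$ to the state through the drift term, which a priori threatens to correlate the increments of $\bar w$ both with $\xi$ and with one another across time. The crux that dissolves it is the exact cancellation in the second step, which hinges on the balancing normalization $\E\{\xi\xi'\}=I_n$ combined with the It\^o correction; beyond that, the only delicate point is the adaptedness bookkeeping (distinguishing a past increment from the future increment $dw(t)$), which must be tracked carefully to justify that the relevant expectations vanish.
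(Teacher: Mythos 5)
Your argument is correct, but it takes a genuinely different route from the paper's. The paper works with explicit variation-of-constants representations: it writes $\bar w(t)-\bar w(s)$ in closed form through the transition matrix $\Phi$ and the kernel $M(t,s)=\int_s^t G(\tau)'\Phi(\tau,s)\,d\tau$, and then shows that each candidate covariance ($\Delta(t,s)$, $Q(t,s)$, and the mixed term) has vanishing partial derivative with respect to the \emph{lower} limit $s$ --- using \eqref{eq:F+F'+GG'=0} together with \eqref{eq:Mdiffeq} --- and vanishes at $s=t$. You instead run forward in the \emph{upper} time variable with the It\^o product rule and obtain homogeneous linear ODEs $\dot\phi=\phi F'$, $\dot\psi=\psi F'$ with zero initial data. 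Your key cancellation, $-G'\E\{\xi\xi'\}\,dt+G'\,dt=0$, is the paper's use of \eqref{eq:F+F'+GG'=0} in disguise, since that identity is precisely the differentiated form of $\E\{\xi(t)\xi(t)'\}\equiv I_n$. Your ordering is also reversed: you prove \eqref{eq:dubar_xi_orthogonality} first and use it (propagated to $\E\{\Delta\,\xi(\tau)'\}=0$ for $\tau\ge t_1$) as the engine for orthogonality of increments, whereas the paper establishes orthogonal increments independently by the same kernel computation. Both are sound; yours is shorter and exposes the mechanism more transparently, while the paper's avoids stochastic calculus entirely by reducing everything to deterministic ODEs for covariance kernels.

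One point to tighten: your normalization step establishes only the infinitesimal statement $\E\{d\bar w\,d\bar w'\}=I_p\,dt+o(dt)$. The integrated statement $\E\{[\bar w(t)-\bar w(s)][\bar w(t)-\bar w(s)]'\}=I_p(t-s)$ --- which is what the paper proves and what is needed for $\bar w$ to be a normalized orthogonal-increment process --- does not follow from the quadratic-variation argument alone, because the drift--noise cross-correlations over a finite interval (the paper's $\Delta(t,s)$) are not \emph{a priori} zero; they are only $O((t-s)^2)$ locally. The integrated version does follow once you have orthogonality of increments (partition $[s,t]$ and sum), or directly from your own product-rule computation: with $\Delta_\tau:=\bar w(\tau)-\bar w(s)$ one gets $\frac{d}{d\tau}\E\{\Delta_\tau\Delta_\tau'\}=I_p$ because the drift contributions are killed by \eqref{eq:dubar_xi_orthogonality}. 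So the normalization should logically be completed after, not before, your second step.
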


\begin{proof}
As is well-known, the solution of \eqref{eq:xicont} can be written in the form
\begin{equation}
\label{eq:xisolution}
\xi(t)=\Phi(t,s)\xi(s) +\int_s^t\Phi(t,\tau)G(\tau)dw,
\end{equation}
where $\Phi(t,s)$ is the transition matrix with the property
\begin{subequations}\label{eq:transition}
\begin{align}
   \frac{\partial\Phi}{\partial t}(t,s)&=F(t)\Phi(t,s), \quad \Phi(s,s)=I_n   \\
   \frac{\partial\Phi}{\partial s}(t,s)&=-\Phi(t,s)F(s), \quad \Phi(t,t)=I_n   
\end{align}
\end{subequations}
Let $s\leq t$. Then, in view of \eqref{eq:dubar},  a straight-forward calculation yields
\begin{align}
\label{eq:ubar(t)-ubar(s)}
   \bar{w}(t)-\bar{w}(s)&=w(t)-w(s)\notag\\&-M(t,s)\xi(s)-\int_s^t M(t,\tau)G(\tau)dw, 
\end{align}
where 
\begin{equation}
\label{eq:M}
M(t,s)=\int_s^t G(\tau)'\Phi(\tau,s)d\tau. 
\end{equation}
Therefore, 
\begin{displaymath}
\E\{[\bar{w}(t)-\bar{w}(s)][\bar{w}(t)-\bar{w}(s))'\}=I_p (t-s) +\Delta(t,s),
\end{displaymath}
where
\begin{align*}
  \Delta(t,s) &= M(t,s)M(t,s)' + \int_s^t M(t,\tau)G(\tau)G(\tau)'M(t,\tau)'d\tau  \\
    &  -  \int_s^t \left[M(t,\tau)G(\tau)+G(\tau)'M(t,\tau)'\right]d\tau.
\end{align*}
However, $ \Delta(t,s)$ is identically zero. To see this, first note that 
\begin{equation}
\label{eq:Mdiffeq}
\frac{\partial M}{\partial s}(t,s)=-M(t,s)F(s)-G(s)'.
\end{equation}
Then, in view of \eqref{eq:F+F'+GG'=0}, a simple calculation shows that 
\begin{displaymath}
\frac{\partial\Delta}{\partial s}(t,s)\equiv 0.
\end{displaymath}
Since $\Delta(t,t)=0$, the assertion follows. Hence the incremental covariance is normalized. 

Next, we show that $\bar{w}(t)$ has orthogonal increments. To this end, choose arbitrary times $s\leq t\leq  a\leq b$ on the interval $[0,T]$, where we choose $a$ and $b$ fixed, and show that 
\begin{displaymath}
Q(t,s):=\E\{[\bar{w}(b)-\bar{w}(a)][\bar{w}(t)-\bar{w}(s))'\}
\end{displaymath}
is identically zero for all $s\leq t$. Using \eqref{eq:ubar(t)-ubar(s)} and 
\begin{align*}
  \bar{w}(b)-\bar{w}(a)  &= w(b)-w(s) -M(b,a)\Phi(a,s) \xi(s) \\
    &  -M(b,a)\int_s^b\Phi(a,\tau)G(\tau)dw -\int_a^b M(b,\tau)dw
\end{align*}
computed analogously, we obtain
\begin{align*}
  Q(t,s)   =M(b,a)\left[\Phi(a,s)M(t,s)' -\int_s^b\Phi(a,\tau)G(\tau)d\tau \right.&  \\
      + \left.\int_s^b\Phi(a,\tau)G(\tau)G(\tau)'M(t,\tau)d\tau\right] .&
\end{align*}
Then, again using \eqref{eq:F+F'+GG'=0}, we see that 
\begin{displaymath}
\frac{\partial M}{\partial s}(t,s)\equiv 0,
\end{displaymath}
so, since $Q(t,t)=0$, we see that $Q(t,s)$ is identically zero, establishing that $\bar{w}(t)$ has orthogonal increments. 

Finally, we use the same trick to show \eqref{eq:dubar_xi_orthogonality}.  In fact, for $s\leq t$, \eqref{eq:xisolution} and \eqref{eq:ubar(t)-ubar(s)} yield
\begin{align*}
&\E\{[\bar{w}(t)-\bar{w}(s))\xi(t)'\}=-M(t,s)\Phi(t,s)'\\ &+\int_s^t  G(\tau)'\Phi(t,\tau)'d\tau -\int_s^t M(t,\tau)G(\tau)G(\tau)')\Phi(t,\tau)'d\tau, 
\end{align*}
the partial derivative of which with respect to $s$ is identical zero; this is seen by again using \eqref{eq:F+F'+GG'=0}.  Therefore, since  \eqref{eq:dubar_xi_orthogonality} is zero for $s=t$, it is identical zero for all $s\leq t$, as claimed. This concludes the proof of Proposition~\ref{prop:dubar_properties}.
\end{proof}

Consequently, \eqref{eq:xicont} and \eqref{eq:dubaralt} form a forward unitary system
\begin{subequations}\label{xsystem_nonstationarycont}
\begin{align}
dx&=A(t)x(t)dt+B(t)dw\\
d\bar w&=dw -\bar{B}(t)'x(t)dt,
\end{align}
\end{subequations}
The corresponding backward unitary system is obtained through the transformation 
\begin{equation}
\label{ }
\bar{x}(t)=P(t)^{\-\half}\xi(t),
\end{equation}
which yields 
\begin{equation}
\label{ }
d\bar{x}=P(t)^{-\half}d\xi +R(t)\xi(t)dt. 
\end{equation}
This together with \eqref{eq:xicontbackward} and \eqref{eq:dubar} yields
\begin{subequations}\label{xbarsystem_nonstationarycont}
\begin{align}
d\bar{x}&=-A(t)'\bar{x}(t)dt+\bar{B}(t)d\bar{w}\label{xbarsystem_nonstationaryconta}\\
dw&=B(t)'\bar{x}(t)dt+d\bar{w},\label{xbarsystem_nonstationarycontb}
\end{align}
\end{subequations}

\subsection{Time reversal in continuous-time systems} 

Next we derive the backward stochastic system corresponding to the non-stationary forward stochastic system
\begin{subequations}\label{nonstat_csystforward}
\begin{align}
&dx=A(t)x(t)dt+B(t)dw,\quad x(0)=x_0\label{nonstat_csystforwarda}\\
&dy=C(t)x(t)dt+D(t)dw, \quad y(0)= 0  \label{nonstat_csystforwardb}
\end{align}
\end{subequations}
defined on the finite interval $[0,T]$, where $x_0$ (with covariance $P_0$) and the normalized Wiener process $w$ are uncorrelated.  To this end, apply the transformation 
\begin{equation}\label{Pbardefn}
\bar{x}(t)=P(t)^{-1}x(t)
\end{equation}
together with \eqref{xbarsystem_nonstationarycontb} to \eqref{nonstat_csystforwardb} to obtain 
\begin{displaymath}
dy=\bar{C}(t)\bar{x}(t)+D(t)d\bar{w}, 
\end{displaymath}
where 
\begin{equation}
\bar{C}(t)=C(t)P(t)+D(t)B(t).
\end{equation}
This together with \eqref{xbarsystem_nonstationaryconta} yields the the backward system corresponding to \eqref{nonstat_csystforward}, namely 
\begin{subequations}\label{nonstat_csystbackward}
\begin{align}
&d\bar{x}=-A(t)'\bar{x}(t)dt+\bar{B}(t)d\bar{w}  \label{nonstat_csystbackwarda}\\
&dy=\bar{C}(t)\bar{x}(t)dt+D(t)d\bar{w}. \label{nonstat_csystbackwardb}
\end{align}
\end{subequations}
with end-point condition $\bar{x}(T)=P(T)^{-1}x(T)$ uncorelated to the Wiener process $\bar{w}$. 

The backward realization \eqref{nonstat_csystbackward} was derived in \cite{Badawi-L-P-79}, but in cumbersome way, requiring the proof that $\bar{w}(t)$ is a normalized process with orthogonal increments to be suppressed. What is new here is imposing the unitary map between $w$ and $\bar{w}$, making the analysis much simpler and more natural. 

\section{Kalman filtering with missing observations}\label{Kalmansec}

We consider the linear stochastic system \eqref{nonstat_csystforward} which does not have a purely deterministic component that enables exact estimation of components of $x$ from $y$, an assumption that we retain in the rest of the paper. 
In the engineering literature is often the case that the stochastic system \eqref{nonstat_csystforward} represented as 
\begin{subequations}\label{eq:whitenoiserepr}
\begin{align}
&\dot x(t)=A(t)x(t)+B(t)\dot w(t),\quad x(0)=x_0\label{eq:whitenoiserepra}\\
&\dot y(t)=C(t)x(t)+D(t)\dot w(t) \label{eq:whitenoisereprb}
\end{align}
\end{subequations}
where the formal ``derivative'' $\dot w$ is white noise, i.e., $\E\{\dot w(t)\dot w(s)'\}=I\delta(t-s)$ with $\delta(t-s)$ being the Dirac ``function''. Of course $\dot x$, $\dot y$ and $\dot w$ are to be interpreted as generalized stochastic processes. From a mathematically rigorous point of view, observing $\dot y$ makes little sense since, for any fixed $t$, $\dot y(t)$ has infinite variance and contains no information about the state process $x$. However, observations of $\dot y$ could be interpreted as observations of the increments $dy$ of $y$ in a precise meaning to be defined next. 
On the other hand, one can think of \eqref{nonstat_csystforward} as a system of type
\begin{displaymath}
dz=M(t)z(t)dt+N(t)dw(t),\quad \text{where $z(t)=\begin{bmatrix} x(t)\\y(t)\end{bmatrix}$},
\end{displaymath}
and one would like to determine the optimal linear least-squares estimate of $x(t)$ given past observed values of $y$.  

Generally this distinction between observing $y$ or $dy$ is not important. However, when there is loss of information over an interval $(t_1,t_2)$, there are two different information patterns depending on whether  $dy$ or $y$ is observed. The difference consists in whether $\Delta y:=y(t_2)-y(t_1)$ is part of the observation record or not. These two cases will be dealt with separately in subsections below. In fact, the former, which is common in engineering applications, is provided as a simplified preliminary, whereas our main interest is in the latter. 
To this end, we first introduce some notation. 

Consider the stochastic system \eqref{nonstat_csystforward} on a finite interval $[0,T]$.  As before, let $\bH$ be the Hilbert space spanned by  $\{w_k(t)-w_k(s);\, s,t\in [0,T],\, k=1,2,\dots,m\}$, endowed with the inner product $\langle\lambda,\mu\rangle =\E\{\lambda\mu\}$. For any $\lambda\in\bH$ and any subspace $\bA$, let $\E^\bA$ denote the orthogonal projection of $\lambda$ onto $\bA$. 
We denote by $\bH_{[t_1,t_2]}(dy)$ the (closed) subspace generated by the  components of the increments of the observation process $y$ over the window $[t_1,t_2]$. In particular, we shall also use the notations $\bH_t^-(dy):=\bH_{[0,t]}(dy)$ and $\bH_t^+(dy):=\bH_{[t,T]}(dy)$.

 Suppose that the output process or its increments are available for observation only on some subintervals of $[0,T]$, namely  $\mathcal{I}_k$, $k=1,2,\dots,\nu$. Next we want to define $\bHo$ as the proper subspace of $\bH_{[0,T]}(dy)$  spanned by the observed data. In the case that only the increments $dy$ or, equivalently, the ``derivative'' $\dot y$ is observed, we simply define 
 \begin{displaymath}
\bHo:=\bH_{\mathcal{I}_1}(dy)\vee\bH_{\mathcal{I}_2}(dy)\vee\cdots\vee\bH_{\mathcal{I}_\nu}(dy),
\end{displaymath}
In the case that the process $y$ is observed, we need to expand $\bHo$ by adding the subspaces spanned by the increments $\Delta y$ over the complementary intervals without observation. In either case, we define 
\begin{equation}
\label{eq:bHom}
\bHom :=\bHo\cap\bH_t^-(dy)\quad\text{and}\quad \bHop :=\bHo\cap\bH_t^+(dy).  
\end{equation}
Then Kalman filtering with missing observations amounts to determining a recursion for $x_-$ where 
\begin{equation}
\label{xminusdefn}
a^\prime x_-(t)=\EbHom a^\prime x(t), \quad \text{for all $a\in\mathbb{R}^n$}.
\end{equation}

\subsection{Observing $dy$ only}\label{sec:dyobs}

When observations are available on the interval $[0,t_1]$, the Kalman filter on that interval is given by
\begin{subequations}\label{eq:kf}
\begin{align}\label{eq:kalman1}
dx_-&=A(t)x_-(t)dt+K_-(t)(dy(t)-C(t)x_-(t)dt)\\\label{eq:kalman2}
K_-&= (Q_-C^\prime +B D^\prime) R^{-1}\\\label{eq:kalman3}
\dot Q_-(t) &=AQ_-+Q_-A^\prime -K_-RK_-^\prime +BB^\prime
\end{align}
\end{subequations}
with $R(t)=D(t)D(t)^\prime$ and initial conditions  $x_-(0)=0$ and $Q(0)=P_0$. Here $Q_-(t)$ is the error covariance
\begin{equation}
\label{Qminus}
Q_-(t):=\E\{[x(t)-x_-(t)](x(t)-x_-(t)]'\},
\end{equation}
which, by the nondeterministic assumption, is positive definite for all $t$. 

Next suppose the observation process becomes unavailable over the interval $[t_1,t_2)\subset [0,T]$. Then the Kalman filter needs to be modified accordingly. In fact, for any $t\in[t_1,t_2)$, \eqref{xminusdefn} holds with the space of observations $\bHom:=\bH_{t_1}^-(dy)$, and consequently
\begin{displaymath}
a^\prime x_-(t)=\E^{\bH_{t_1}^-(dy)}a^\prime x(t) =a'\Phi(t,t_1)x_-(t_1).
\end{displaymath}
This corresponds to setting $K_-(t)=0$ in \eqref{eq:kf} on the interval $[t_1,t)$  so that
\begin{subequations}\label{eq:freeevKalman}
\begin{equation}
dx_-=A(t)x_-(t)dt
\end{equation}
with initial condition $x_-(t_1)$ given by \eqref{eq:kalman1}. The error covariance $Q_-$ is then  given by the Lyapunov equation 
\begin{equation}
\label{QminusLyapunov}
\dot Q_-(t) =AQ_-+Q_-A^\prime  +BB^\prime
\end{equation}
\end{subequations}
with initial the condition $Q_-(t_1)$ given by the value produced in the previous interval. 

Then suppose observations of $dy$ become available again on the interval $[t_2,t_3)$. Then, for any $t\in[t_2,t_3)$, we have
\begin{displaymath}
\bHop=\bH_{[0,t_1]}\vee\bH_{[t_2,t]},
\end{displaymath}
so the Kalman estimate is generated by \eqref{eq:kf}  but now with initial conditions $x_-(t_2)$ and $Q_-(t_2)$ being those computed in the previous step without observation. In the case there are more intervals, one proceeds similarly by alternating between filters \eqref{eq:kf} and \eqref{eq:freeevKalman} depending on whether increments $dy$ are available or not.

In an identical manner, a cascade of  backward Kalman filters generates a process $\bar x_+(t)$ based on the backward stochastic realization \eqref{nonstat_csystbackward} and the observation windows $[t,T]$. Assuming that there are observations in a final interval ending at $t=T$, on that interval the Kalman estimate 
\begin{equation}
\label{xbarplusdefn}
a'\bar{x}_+(t)=\EbHop a'\bar{x}(t), 
\end{equation} 
with initial observation space $\bHop:=\bH_{[t,T]}$, is generated by the backward Kalman filter 
\begin{subequations}\label{eq:bkf}
\begin{align}
d\bar{x}_+&=-A(t)'\bar{x}_+(t)dt\notag \\
&\phantom{xxxxxx}+\bar K_+(t)(dy(t)-\bar{C}(t)\bar{x}_+(t)dt)\label{eq:bkalman1}\\
\bar K_+&= -(\bar{Q}_+\bar{C}^\prime -\bar{B} D^\prime) R^{-1}\label{eq:bkalman2}\\
\dot{\bar{Q}}_+ &=-A^\prime\bar{Q}_+ -\bar{Q}_+A +\bar{K}_+R(t)\bar{K}_+(t)^\prime -\bar{B}\bar{B}^\prime \label{eq:bkalman3}
\end{align} 
\end{subequations}
and initial conditions $\bar{x}_+(T)=0$ and $\bar{Q}_+(T)=\bar{P}(T)$ for $\bar{x}_+$ and the error covariance \begin{equation}
\bar{Q}_+(t):=\E\{[ \bar{x}(t)-\bar{x}_+(t)][ \bar{x}(t)-\bar{x}_+(t)]^\prime\},
\end{equation}
which like $Q_-(t)$ is positive definite for all $t$. 
During periods of no observations of $dy$, we then set the gain $\bar K_+=0$.  This update is  obtained  from the backward time stochastic model \eqref{xbarsystem_nonstationarycont} in an identical manner to that of \eqref{eq:freeevKalman}. 

Consequently,  both the underlying process as well as the filter can run in either time-direction. This duality becomes essential in subsequent sections where we will be concerned with smoothing and interpolation.

\subsection{Observing $y$}\label{sec:dyobs}

Now consider the case that $y$, and note merely $dy$,  is available for observation on all intervals $\mathcal{I}_k$, $k=1,2,\dots,\nu$. Under this scenario and with a continuous-time process the dynamics of Kalman filtering become hybrid, requiring both continuous-time filtering when data is available as well as a discrete-time update across intervals where measurements are not available.

Then on the first interval $[0,t_1]$ the Kalman estimate \eqref{eq:kf} will still be valid. However, 
when $t$ reaches the endpoint $t_2$ of the interval of no information and an observation of $y$ is obtained again, the subspace of observed data becomes
\begin{displaymath}
\bHotwo =\bH_{t_1}^-\vee \bH(\Delta y),
\end{displaymath}
where $\Delta y:=y(t_2)-y(t_1)$. 
Computing $x(t_2)$ across the window $(t_1,t_2]$ as a function of $x(t_1)$ and the noise components we have that
\begin{displaymath}
x(t_2)= \underbrace{\Phi(t_2,t_1)}_{A_d}x(t_1) + \underbrace{\int_{t_1}^{t_2}\Phi(t_2,s)Bdw(s)}_{u_1(t_1)}
\end{displaymath}
while
\begin{displaymath}
y(t_2)= y(t_1) + \int_{t_1}^{t_2} C(t) x(t) dt  + \int_{t_1}^{t_2} D(t)dw(t).
\end{displaymath}
Therefore,
\begin{displaymath}
\Delta y = \underbrace{\int_{t_1}^{t_2} C(t)\Phi(t,t_1)dt)}_{C_d} x(t_1) +u_2(t_1)
\end{displaymath}
where
\begin{displaymath}
\begin{split}
u_2(t_1)&=\int_{t_1}^{t_2}C(t)\int_t^{t_1}\Phi(t,s)B(s)dw(s)dt\\
&\phantom{xxxxxxxxxxxxxxxxxxxx}+ \int_{t_1}^{t_2} D(s)dw(s)\\
&=\int_{t_1}^{t_2}\underbrace{\left(\int_t^{t_2}C(t)\Phi(t,s)dt B(s)+ D(s)\right)}_{M(s)} dw(s).
\end{split}
\end{displaymath}
Thus, we obtain the discrete-time update
\begin{subequations}\label{eq:across}
\begin{align}
x(t_2)&=A_d x(t_1) + B_d v(t_1)\\
\Delta y&=C_d x(t_1) +D_d v(t_1)
\end{align}
\end{subequations}
where
\[
u(t_1)=\left(\begin{matrix}u_1(t_1)\\u_2(t_1)\end{matrix}\right)=
\left(\begin{matrix}B_d\\D_d\end{matrix}\right)v(t_1)
\]
and $B_d$ and $D_d$ are chosen so that
{\footnotesize
\[
\left(\begin{matrix}B_d\\D_d\end{matrix}\right)\hspace*{-3pt}\left(B_d^\prime,\;{D^\prime_d}\right)\hspace*{-1pt} = 
\hspace*{-1pt}\int_{t_1}^{t_2}\hspace*{-3pt}
\left(\begin{matrix}\Phi(t_2,s)BB^\prime \Phi(t,s) &\hspace*{-7pt} \Phi(t_2,s)BM(s)^\prime\\M(s)B^\prime\Phi(t_2,s)^\prime & M(s)M(s)^\prime
\end{matrix}\right)ds
\]
}
while $E\{v(t_1)v(t_1)^\prime\}=I$.

Hence, across the window of missing data the Kalman state estimate $x_-$ is now generated by a discrete-time Kalman-filter step
\begin{subequations}\label{eq:kf_gap}
\begin{align}\label{eq:kalman3}
x_-(t_2)&=A_d x_-(t_1)+K_d(\Delta y-C_d x_-(t_1))\\\label{eq:kalman4}
K_d&=(A_dQ(t_1)C_d^\prime+B_dD_d^\prime)\notag\\ 
&\phantom{xxxxxxxxxxx}\times(C_dQ(t_1)C_d^\prime + D_dD_d^\prime)^{-1}
\end{align}
with initial conditions $x_-(t_1)$ and $Q(t_1)$ given by \eqref{eq:kf} and the error covariance at $t_2$  by
\begin{align}\label{eq:kalman3}
Q(t_2) &= A_dQ(t_1)A_d^\prime -K_d(C_dQ(t_1)C_d^\prime \notag\\
&\phantom{xxxxxxxxxxxxxxxx} + D_dD_d^\prime)K_d'+B_dB_d^\prime.
\end{align}
\end{subequations}
In the next interval $[t_2,t_3]$, where observations of $y$  are available, the new Kalman estimate \eqref{xminusdefn} with 
\begin{displaymath}
\bHop=\bH_{[0,t_1]}\vee \bH(\Delta y)\vee\bH_{[t_2,t]}
\end{displaymath}
is again generated by the continuous-time Kalman filter \eqref{eq:kf} starting from  $x_-(t_2)$ and $Q(t_2)$ given by \eqref{eq:kf_gap}.

Again given an observation pattern, where intermittently  $y$ becomes unavailable for observation, the Kalman estimate \eqref{xminusdefn} can be generated in precisely this manner by a cascade of continuous and discrete-time Kalman filters.

\begin{remark}
The observation pattern of a continuous-time stochastic model, where $y$ becomes unavailable over particular time-windows, is closely related to hybrid stochastic models where continuous-time diffusion is punctuated by discrete-time transitions. Indeed, unless interpolation of the statistics within windows of unavailable data is the goal, the end points of such intervals can be identified and the same hybrid model utilized to capture the dynamics.
\end{remark}

\begin{remark}
A common engineering scenario is the case where the signal is lost while the observation noise is still present. This amounts to having $C\equiv 0$ over the corresponding window, and the Kalman estimates are obtained by merely running the filters \eqref{eq:kf} and \eqref{eq:bkf} in the two time directions with the modified condition on $C$. 
This situation does not cover the information patterns discussed above since, whenever $BD'\ne 0$, the Kalman gains do not vanish and information about the state process is available even when $C$ is zero.
\end{remark}

\subsection{Smoothing}

Given these intermittent forward and backward Kalman estimates, we shall derive a formula for the smoothing estimate 
\begin{equation}
\label{smoothingestimate}
a'\hat{x}(t):=\E^\bHo a'x(t),\quad a\in\mathbb{R}^n ,
\end{equation}
valid for both the cases discussed above, where 
\begin{equation}
\label{Hring}
\bHo:=\bHomn_t\vee\bHopn_t\subset\bH_{[0,T]}(dy)
\end{equation}
is the complete subspace of observations. This is discussed next.

\section{Geometry of fusion}\label{sec:geometryoffusion}

Consider the system \eqref{nonstat_csystforward}, and let $\bX(t)$ be the (finite-dimensional) subspace in $\bH$ spanned by the components of the stochastic state vector $x(t)$. Then it can be shown \cite[Chapter 7]{LPbook} that $\bH_{[0,t]}(dy)\perp\bH_{[t,T]}(dy)\mid \bX_t$, where $\bA\perp\bB\mid \bX$ denotes the conditional orthogonality
\begin{equation}
\label{splitting}
\langle \alpha -\E^\bX\alpha,\beta-\E^\bX\beta\rangle=0 \quad \text{for all $\alpha\in\bA$, $\beta\in\bB$}.
\end{equation}
Next, let $\bX_-(t)$ and $\bX_+(t)$ be the subspaces spanned by the components of the (intermittent) Kalman estimates $x_-(t)$ and $\bar{x}_+(t)$, respectively. Then since $\bX_-(t)\subset\bHomn_t\subset\bH_{[0,t]}(dy)$ and $\bX_+(t)\subset\bHopn_t\subset\bH_{[t,T]}(dy)$, we have 
\begin{displaymath}
\bX_-(t)\perp\bX_+(t)\mid\bX(t),
\end{displaymath} 
which is equivalent to 
\begin{subequations}\label{eq:splittingcommutative}
\begin{equation}
\label{splitting2}
\E^{\bX_+(t)}a'x_-(t)=\E^{\bX_+(t)}\E^{\bX(t)}a'x_-(t),\quad a\in\mathbb{R}^n 
\end{equation}
\cite[Proposition 2.4.2]{LPbook}. Therefore  the diagram
\begin{equation}
\label{commutative}
\alignedat5
{\bX_-\hspace*{3pt}}&{}&\overset{\hspace*{1pt}\E^{\bX_+}|_{\bX_-}}\longrightarrow &{} &\hspace*{-7pt}{\bX_+}\hspace*{7pt}\\
\;_{\E^{\bX}|_{\bX_-}}&\hspace*{-7pt}\searrow&{}&\nearrow&\hspace*{-3pt}\;_{\E^{\bX_+}|_{\bX}}\\
{}&{}&\bX\hspace*{10pt}&{}
\endalignedat
\end{equation}
\end{subequations}
commutes, where the argument $t$ has been suppressed.  

\begin{lemma}\label{Plemma}
Let $x(t)$, $\bar{x}(t)$, $x_-(t)$ and $\bar{x}_+(t)$ be defined as above. 
Then, for each $t\in[0,T]$,
\begin{itemize}
  \item[{\rm (i)}] $\E\{x(t)x_-(t)'\}=P_-(t)$
  \item[{\rm (ii)}] $\E\{\bar{x}(t)\bar x_+(t)'\}=\bar P_+(t)$
  \item[{\rm (iii)}] $\E\{\bar{x}_+(t)x_-(t)'\}=\bar{P}_+(t)P_-(t)$,
\end{itemize}
where $P_-(t):=\E\{x_-(t)x_-(t)'\}$ is the state covariance of the Kalman estimate $x_-(t)$ and $P_+(t):=\E\{\bar x_+(t)\bar x_+(t)'\}$ is the covariance of the backward Kalman estimate $\bar{x}_+(t)$.
\end{lemma}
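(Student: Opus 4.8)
The plan is to dispatch parts (i) and (ii) as immediate consequences of the defining projection property, and to reserve the real work for part (iii), where the conditional orthogonality encoded in diagram \eqref{commutative} does all the lifting. For (i), I would observe that each component $a'x_-(t)=\EbHom a'x(t)$ is an orthogonal projection, so the error $a'x(t)-a'x_-(t)$ is orthogonal to every element of $\bHom$, in particular to $b'x_-(t)$ for all $b$. Written as a matrix identity this reads $\E\{[x(t)-x_-(t)]x_-(t)'\}=0$, hence $\E\{x(t)x_-(t)'\}=\E\{x_-(t)x_-(t)'\}=P_-(t)$. Part (ii) is the verbatim mirror image: $a'\bar x_+(t)=\EbHop a'\bar x(t)$ projects $\bar x(t)$ onto $\bHop$ and $\bar x_+(t)$ lives there, so $\E\{\bar x(t)\bar x_+(t)'\}=\bar P_+(t)$.

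For (iii) I would proceed in four moves. First, since the components of $\bar x_+(t)$ span $\bX_+(t)$, I may insert a projection at no cost: $\E\{\bar x_+(t)x_-(t)'\}=\E\{\bar x_+(t)[\E^{\bX_+(t)}x_-(t)]'\}$. Second, I invoke the splitting property $\bX_-(t)\perp\bX_+(t)\mid\bX(t)$ established just before \eqref{commutative}, equivalently the commutativity of that diagram, to factor the projection through the state space: $\E^{\bX_+(t)}x_-(t)=\E^{\bX_+(t)}\E^{\bX(t)}x_-(t)$. Third, I compute the inner projection explicitly. Since $\bX(t)$ is spanned by the components of $x(t)$ (equivalently of $\bar x(t)=P(t)^{-1}x(t)$), the projection formula gives $\E^{\bX(t)}x_-(t)=\E\{x_-(t)x(t)'\}P(t)^{-1}x(t)$, and part (i) together with symmetry of $P_-$ turns this into $P_-(t)P(t)^{-1}x(t)=P_-(t)\bar x(t)$. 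Fourth, applying $\E^{\bX_+(t)}$ and pulling out the deterministic matrix $P_-(t)$, I use that $\E^{\bX_+(t)}\bar x(t)=\bar x_+(t)$, which holds because the nesting $\bX_+(t)\subset\bHop$ lets me write $\E^{\bX_+(t)}\bar x(t)=\E^{\bX_+(t)}\EbHop\bar x(t)=\E^{\bX_+(t)}\bar x_+(t)=\bar x_+(t)$. This delivers $\E^{\bX_+(t)}x_-(t)=P_-(t)\bar x_+(t)$, and substituting back yields $\E\{\bar x_+(t)x_-(t)'\}=\E\{\bar x_+(t)\bar x_+(t)'\}P_-(t)=\bar P_+(t)P_-(t)$.

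The main obstacle is conceptual rather than computational: the whole argument hinges on correctly deploying the conditional-orthogonality splitting of \eqref{commutative}, which is precisely what forces the cross-covariance to factor as a product. The two supporting subtleties I would be careful about are, first, the identification $\E^{\bX(t)}x_-(t)=P_-(t)\bar x(t)$, where the time-symmetric normalization $\bar x=P^{-1}x$ is exactly what converts the projection onto the forward state into a scalar-matrix multiple of the backward state; and second, the collapse $\E^{\bX_+(t)}\bar x(t)=\bar x_+(t)$, which quietly uses the nesting $\bX_+(t)\subset\bHop$ so that projecting onto the small state space agrees with the full backward Kalman estimate. Everything else is routine bookkeeping with the projection formula.
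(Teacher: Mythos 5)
Your proof is correct and follows essentially the same route as the paper: (i) and (ii) from the orthogonality of the estimation error, and (iii) by factoring $\E^{\bX_+(t)}x_-(t)$ through $\bX(t)$ via the conditional-orthogonality diagram and the identity $\E^{\bX(t)}x_-(t)=P_-(t)\bar x(t)$. The only (cosmetic) difference is in the last projection: the paper evaluates $\E^{\bX_+(t)}b'\bar x(t)$ explicitly via the dual basis $x_+=\bar P_+^{-1}\bar x_+$ and part (ii), whereas you obtain $\E^{\bX_+(t)}\bar x(t)=\bar x_+(t)$ from the nesting $\bX_+(t)\subset\bHop$ and the tower property, which is equally valid.
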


\begin{proof}
By the definition of the Kalman filter, \eqref{xminusdefn} holds, 
and consequently the components of the estimation error $x(t)-x_-(t)$ are orthogonal to $\bH_t^-$ and hence to the components of $x_-(t)$. 
Therefore, 
\begin{displaymath}
\E\{x(t)x_-(t)'\}=\E\{x_-(t)x_-(t)'\}=P_-(t),
\end{displaymath} 
proving condition (i). Condition (ii) follows from a symmetric argument. To prove (iii) we use condition \eqref{eq:splittingcommutative}. To this end, first note that, by the usual projection formula,
\begin{equation}
\label{Hankel}
\begin{split}
\E^{\bX_+(t)}a'x_-(t)&=\E\{a'x_-(t)\bar{x}_+(t)\}\bar{P}_+(t)^{-1}\bar{x}_+(t)\\
&=a'\E\{x_-(t)\bar{x}_+(t)'\}x_+(t),
\end{split}
\end{equation}
where  $x_+(t):=\bar{P}_+(t)^{-1}\bar{x}_+(t)$ is the dual basis in $\bX_+(t)$ such that $\E\{x_+(t)\bar{x}_+(t)'\}=I$.
Moreover, 
\begin{displaymath}
\begin{split}
\E^{\bX(t)}a'x_-(t)&=E\{a'x_-(t)x(t)'\}P(t)^{-1}x(t)\\
&=a'\E\{x_-(t)x(t)'\}\bar{x}(t)=a'P_-(t)\bar{x}(t), 
\end{split}
\end{displaymath}
where we have used condition (i) and \eqref{Pbardefn}.
Next, set $b:=P_-a$ and form
\begin{displaymath}
\begin{split}
\E^{\bX_+(t)}b'\bar{x}(t)&=\E\{b'\bar{x}(t)\bar{x}_+(t)\}\bar{P}_+(t)^{-1}\bar{x}_+(t)\\
&=b'\E\{\bar{x}(t)\bar{x}_+(t)\}x_+(t)\\
&=b'\bar{P}_+(t)x_+(t),
\end{split}
\end{displaymath}
by condition (ii), and consequently
\begin{equation}
\label{CstarO}
\E^{\bX_+(t)}E^{\bX(t)}a'x_-(t)=a'P_-(t)\bar{P}_+(t)x_+(t). 
\end{equation}
Then condition (iii) follows from \eqref{splitting2}, \eqref{Hankel} and \eqref{CstarO}.
\end{proof}

\begin{remark}
The proof of condition (iii) in  Lemma~\ref{Plemma} could be simplified if $\bar{x}_+$ were a regular backward Kalman estimate without intermittent loss of information. In this case, $x_+=\bar{P}_+^{-1}\bar{x}_+$ would be generated by a forward stochastic realization belonging to the same class as \eqref{nonstat_csystforward} and $\E\{\bar{x}_+(t)x_-(t)'\}=\bar{P}_+(t)\E\{x_+(t)x_-(t)\}=\bar{P}_+(t)\E\{x_-(t)x_-(t)\}$.
\end{remark}

\begin{lemma}\label{framespacelemma}
For each $t\in [0,T]$, the smoothing estimate $\hat{x}(t)$, defined by \eqref{smoothingestimate},  is given by
\begin{equation}
\label{smoothingestimate2}
a'\hat{x}(t)=E^{\bH_t^\square} a'x(t),\quad a\in\mathbb{R}^n ,
\end{equation}
where  $\bH_t^\square$ is the subspace 
\begin{equation}
\label{framespace}
\bH_t^\square=\bX_-(t)\vee\bX_+(t).
\end{equation}
\end{lemma}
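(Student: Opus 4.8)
The lemma claims that the smoothing estimate—the projection of $x(t)$ onto the full observation space $\bHo = \bHomn_t \vee \bHopn_t$—equals the projection onto the much smaller, finite-dimensional space $\bH_t^\square = \bX_-(t)\vee\bX_+(t)$ spanned by the two (intermittent) Kalman estimates. This is the content that makes the two-filter formula work: all the information in $\bHo$ relevant to estimating $x(t)$ is already captured by the forward and backward state estimates.

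**The plan.**

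The plan is to exploit the splitting property established in Section~\ref{sec:geometryoffusion}, namely that $\bH_{[0,t]}(dy)\perp\bH_{[t,T]}(dy)\mid\bX(t)$, combined with the standard fact that $\bX_-(t)$ and $\bX_+(t)$ are themselves \emph{sufficient statistics} for the past and future observation records, respectively. First I would recall that by definition of the forward Kalman filter, $x_-(t)$ is the projection of $x(t)$ onto $\bHomn_t$; since the state process is a Markovian carrier, $\bX_-(t)$ is a splitting subspace for the past, so $\E^{\bHomn_t}a'x(t)=\E^{\bX_-(t)}a'x(t)$ for every $a$. The symmetric statement holds for the future: $\E^{\bHopn_t}a'x(t)=\E^{\bX_+(t)}a'x(t)$. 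Thus the projections onto the large past/future spaces and onto the small $\bX_\pm(t)$ agree when estimating $x(t)$.

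**The main step.**

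The crux is to upgrade these two separate facts into a statement about the \emph{join} $\bHo=\bHomn_t\vee\bHopn_t$. The key is the conditional-orthogonality (splitting) relation $\bHomn_t\perp\bHopn_t\mid\bX(t)$, which holds because $\bHomn_t\subset\bH_{[0,t]}(dy)$ and $\bHopn_t\subset\bH_{[t,T]}(dy)$ and the state $\bX(t)$ splits past from future. A standard lemma on splitting subspaces (see \cite[Proposition 2.4.2]{LPbook}) then guarantees that when estimating any element of $\bX(t)$ from the join of two conditionally orthogonal subspaces, the joint estimate decomposes: the projection onto $\bHomn_t\vee\bHopn_t$ of $a'x(t)$ can be computed by first projecting onto $\bX(t)$ and then fusing the two one-sided contributions, each of which lives in $\bX_-(t)$ and $\bX_+(t)$ respectively. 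Concretely, because $x(t)\in\bX(t)$ and $\bX(t)$ splits the two observation spaces, the innovation contributions from past and future are orthogonal given $\bX(t)$, so no information outside $\bX_-(t)\vee\bX_+(t)$ is used.

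**The obstacle.**

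The hard part will be making the fusion step rigorous across the \emph{intermittent} information pattern, since $\bX_+(t)$ is \emph{not} a regular (non-interrupted) backward Kalman estimate—a subtlety the authors themselves flag in the Remark after Lemma~\ref{Plemma}. In the regular case one could invoke a clean forward/backward Markovian splitting; here one must instead rely directly on the conditional-orthogonality diagram \eqref{eq:splittingcommutative} and Lemma~\ref{Plemma}(iii), which was proved precisely to handle this irregularity. The argument therefore must route through the commutativity of diagram \eqref{commutative}: projecting $x_-$ onto $\bX_+$ factors through $\bX$, which is exactly what licenses replacing the infinite-dimensional join $\bHo$ by the finite-dimensional $\bX_-(t)\vee\bX_+(t)$ without losing any estimation-relevant information. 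I would close by verifying that the error $x(t)-\E^{\bH_t^\square}a'x(t)$ is orthogonal to all of $\bHo$, which combined with $\bH_t^\square\subset\bHo$ identifies the two projections and completes the proof.
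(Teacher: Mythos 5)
Your overall strategy---show that the error $a'x(t)-\E^{\bH_t^\square}a'x(t)$ is orthogonal to all of $\bHo$, so that the projection onto the large observation space collapses onto the finite-dimensional $\bH_t^\square$---is exactly the strategy of the paper's proof, and the ingredients you name (the Kalman-filter orthogonality and the conditional orthogonality of past and future given $\bX(t)$) are the right ones. The problem is that the decisive step is never carried out, and the mechanism you describe in your ``main step'' would not deliver it. Saying that one can ``first project onto $\bX(t)$ and then fuse the two one-sided contributions'' is not a correct description: $a'x(t)$ already lies in $\bX(t)$, so that projection does nothing, and the projection onto a join $\bHomn_t\vee\bHopn_t$ is not in general a sum of one-sided projections. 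Likewise, Lemma~\ref{Plemma}(iii) and the commutative diagram \eqref{commutative} are not what this lemma rests on; they are used later, in the proof of the two-filter theorem, to determine the coefficients $L_-$ and $\bar L_+$.

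What actually closes the argument, and what the paper does, is the frame-space decomposition. Set $\bN^-(t):=\bHomn_t\ominus\bX_-(t)$ and $\bN^+(t):=\bHopn_t\ominus\bX_+(t)$, so that $\bHo=\bN^-(t)\oplus\bH_t^\square\oplus\bN^+(t)$ as an \emph{orthogonal} direct sum; the cross-orthogonality (e.g.\ $\bN^-(t)\perp\bX_+(t)$) is where the splitting property you cite is genuinely used, since any $\nu\in\bN^-(t)$ satisfies $\langle\nu,b'x(t)\rangle=\langle\nu,b'x_-(t)\rangle=0$, hence $\E^{\bX(t)}\nu=0$, and conditional orthogonality then forces $\nu\perp\bHopn_t$. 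With that in place, the verification you postpone is a two-line computation: write $a'x(t)=a'\bigl(x(t)-x_-(t)\bigr)+a'x_-(t)$, note that the first term is orthogonal to $\bHomn_t\supset\bN^-(t)$ by \eqref{xminusdefn} while the second lies in $\bX_-(t)\perp\bN^-(t)$, so $a'x(t)\perp\bN^-(t)$, and symmetrically $a'x(t)\perp\bN^+(t)$. This is the content your outline is missing; without it the lemma is asserted rather than proved.
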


\begin{proof}
Following \cite{Lindquist-P-91,Badawi-L-P-79,LPbook}, define $\bN^-(t):=\bHomn_t\ominus\bX_-(t)$ and $\bN^+(t):=\bHopn_t\ominus\bX_+(t)$. Then 
\begin{displaymath}
\bHo=\bN^-(t)\oplus\bH_t^\square\oplus\bN^+(t).
\end{displaymath}
Now,  $a'(x(t)-x_-(t))$ is orthogonal to $\bHomn_t$ and hence to $\bN^-(t)$. Also $a'x_-(t)\perp\bN^-(t)$. Hence  $a'x(t)\perp\bN^-(t)$ as well. In the same way we see that $a'x(t)\perp\bN^+(t)$. Therefore \eqref{smoothingestimate2} follows. 
\end{proof}

Consequently, the information from the two Kalman filters can be fused into the smoothing estimate  
\begin{equation}
\label{xhatlinercomb}
\hat{x}(t)=L_-(t)x_-(t)+\bar{L}_+(t)\bar{x}_+(t)
\end{equation}
for some matrix functions $L_-$ and $\bar{L}_+$. 

\section{Universal two-filter formula}\label{sec:twofilters}

To obtain a robust and particularly simple smoothing formula that works also with an intermittent observation pattern, we assume that the stochastic system \eqref{nonstat_csystforward} has already been transformed via \eqref{eq:AB2FG_cont} so that, for all $t\in [0,T]$, 
\begin{equation}
\label{eq:x=barx}
x(t)=\bar{x}(t)
\end{equation}
and therefore
\begin{equation}
\label{ }
P(t)=\E\{x(t)x(t)'\}=I=\bar{P}(t). 
\end{equation}
Then the error covariances in the filtering formulas of Section~\ref{Kalmansec} are  
\begin{equation}
\label{QminusQplusbar}
Q_-=I-P_-\quad \text{and} \quad\bar{Q}_+=I-\bar{P}_+. 
\end{equation}
Consequently, $x(t)$, $\bar{x}(t)$, $P_-(t)$ and $\bar{P}_+(t)$ are all bounded in norm by one for all $t\in [0,T]$. 

\begin{theorem}
Suppose that \eqref{eq:x=barx} holds. 
For every $t\in [0,T]$, we have the formula 
\begin{equation}
\label{twofilterformula}
\hat{x}(t)=Q(t)\left(Q_-(t)^{-1}x_-(t)+\bar{Q}_+(t)^{-1}\bar{x}_+(t)\right)
\end{equation}
for the smoothing estimate \eqref{smoothingestimate}, where the estimation error 
\begin{equation}
\label{errorQ}
Q(t):=\E\left\{\left(x(t)-\hat{x}(t)\right)\left(x(t)-\hat{x}(t)\right)'\right\}
\end{equation}
is given by 
\begin{equation}
\label{Qformula}
Q(t)^{-1}=Q_-(t)^{-1}+\bar{Q}_+(t)^{-1}-I,
\end{equation}
and where $x_-$, $\bar{x}_+$, $Q_-$ and $Q_+$ are given by \eqref{eq:kf} and \eqref{eq:bkf} with boundary conditions $x_-(0)=\bar{x}_+(T)=0$ and $Q_-(0)=Q_+(T)=I$.
\end{theorem}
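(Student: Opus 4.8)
The plan is to exploit the normalization \eqref{eq:x=barx}, which collapses the forward and backward state spaces into one, and then to fuse the two Kalman estimates using the geometric structure already established in Lemma~\ref{Plemma} and Lemma~\ref{framespacelemma}. First I would use Lemma~\ref{framespacelemma} to write $\hat{x}(t)=\E^{\bH_t^\square}a'x(t)$ with $\bH_t^\square=\bX_-(t)\vee\bX_+(t)$, and then observe that under \eqref{eq:x=barx} the covariance identities of Lemma~\ref{Plemma} simplify: since $\bar{x}=x$ we have $P_+=\bar{P}_+$, condition (i) reads $\E\{x x_-'\}=P_-$, condition (ii) reads $\E\{x\bar{x}_+'\}=\bar{P}_+$, and condition (iii) gives the crucial cross-covariance $\E\{\bar{x}_+x_-'\}=\bar{P}_+P_-$. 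These three relations, together with $P_-=\E\{x_-x_-'\}$ and $\bar P_+=\E\{\bar x_+\bar x_+'\}$, furnish the full Gram matrix of the frame $\{x_-(t),\bar{x}_+(t)\}$ spanning $\bH_t^\square$.

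The main computation is then the projection of $a'x(t)$ onto $\bH_t^\square$ expressed in the (generally oversized, possibly rank-deficient) basis $\{x_-,\bar{x}_+\}$. Writing $\hat{x}=L_-x_-+\bar{L}_+\bar{x}_+$ as in \eqref{xhatlinercomb}, the normal equations demand that $x-\hat{x}$ be orthogonal to both $x_-$ and $\bar{x}_+$, i.e. $\E\{(x-\hat{x})x_-'\}=0$ and $\E\{(x-\hat{x})\bar{x}_+'\}=0$. Substituting the Gram entries above yields a linear system for $L_-,\bar{L}_+$. My expectation is that the algebra is cleanest if I first guess the answer \eqref{twofilterformula} and verify the orthogonality conditions directly. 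Concretely, with $Q_-=I-P_-$ and $\bar{Q}_+=I-\bar{P}_+$ from \eqref{QminusQplusbar}, I would check that $\hat{x}=Q(Q_-^{-1}x_-+\bar{Q}_+^{-1}\bar{x}_+)$ with $Q^{-1}=Q_-^{-1}+\bar{Q}_+^{-1}-I$ satisfies both normal equations. For the $x_-$ equation, $\E\{x x_-'\}-\E\{\hat{x}x_-'\}=P_--Q(Q_-^{-1}P_-+\bar{Q}_+^{-1}\bar{P}_+P_-)$; using $Q_-^{-1}P_-=Q_-^{-1}(I-Q_-)=Q_-^{-1}-I$ and $\bar{Q}_+^{-1}\bar{P}_+=\bar{Q}_+^{-1}-I$, the parenthetical becomes $(Q_-^{-1}+\bar{Q}_+^{-1}-I)P_-=Q^{-1}P_-$, and the whole expression collapses to $P_--QQ^{-1}P_-=0$. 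The symmetric check against $\bar{x}_+$ goes through identically by swapping roles.

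It remains to identify $Q(t)$ in \eqref{errorQ} with the fused error covariance, and here I would compute $\E\{(x-\hat{x})(x-\hat{x})'\}=\E\{(x-\hat{x})x'\}$, the second equality holding because $x-\hat{x}\perp\bH_t^\square\supset\bX_-\ni$ (but more directly because $\hat x\in\bH_t^\square$ and $x-\hat x\perp\bH_t^\square$, while $x=\hat x+(x-\hat x)$). Expanding $\E\{(x-\hat{x})x'\}=P-\E\{\hat{x}x'\}=I-Q(Q_-^{-1}P_-+\bar{Q}_+^{-1}\bar{P}_+)$ and applying the same simplifications $Q_-^{-1}P_-=Q_-^{-1}-I$, $\bar Q_+^{-1}\bar P_+=\bar Q_+^{-1}-I$ gives $I-Q(Q^{-1}-I)=Q$, confirming \eqref{Qformula} and \eqref{twofilterformula} simultaneously. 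The boundary conditions follow directly from the filter initializations $x_-(0)=\bar{x}_+(T)=0$ and $Q_-(0)=\bar{Q}_+(T)=I$ in \eqref{eq:kf}–\eqref{eq:bkf}.

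The step I expect to be the genuine obstacle is the rank-deficiency issue: when the frame $\{x_-,\bar{x}_+\}$ is linearly dependent (which can occur, since $\bH_t^\square$ may have dimension less than $2n$), the coefficients $L_-,\bar{L}_+$ are not unique and the Gram matrix is singular, so the naive normal-equation inversion is unavailable. The virtue of the guess-and-verify route above is that it sidesteps matrix inversion of the Gram matrix entirely: I only ever invert $Q_-$, $\bar{Q}_+$ and $Q$, each of which is positive definite under the nondeterministic assumption (so $Q_-,\bar{Q}_+\prec I$ strictly, and $Q^{-1}=Q_-^{-1}+\bar{Q}_+^{-1}-I\succ I\succ 0$). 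Thus the proposed estimate is well-defined and the orthogonality verification holds identically in the Gram entries regardless of rank, which is exactly why the formula is \emph{universal} across intervals with or without data. I would therefore present the proof as the verification computation rather than a derivation, remarking that well-posedness of all inverses is guaranteed by \eqref{QminusQplusbar} and the positive-definiteness of $Q_-,\bar{Q}_+$.
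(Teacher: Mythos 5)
Your proposal is correct and follows essentially the same route as the paper: both rest on the representation $\hat{x}=L_-x_-+\bar{L}_+\bar{x}_+$ from Lemma~\ref{framespacelemma}, the orthogonality conditions \eqref{orthogonal}, the Gram entries from Lemma~\ref{Plemma}, and the identities $Q_-^{-1}P_-=Q_-^{-1}-I$, $\bar{Q}_+^{-1}\bar{P}_+=\bar{Q}_+^{-1}-I$. The only difference is presentational — the paper solves the normal equations for $L_-,\bar{L}_+$ (cancelling the positive definite factors $P_-,\bar{P}_+$) whereas you verify the guessed formula, which is the same algebra run backwards and, as you note, only requires invertibility of $Q_-,\bar{Q}_+,Q$.
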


\begin{proof}
Clearly the matrix functions $L_-$ and $\bar{L}_+$ in \eqref{xhatlinercomb} can be 
determined from the orthogonality relations
\begin{subequations}\label{orthogonal}
\begin{equation}
\label{orthogonal1}
\E\{[x(t)-\hat{x}(t)]x_-(t)'\}=0
\end{equation} 
and
\begin{equation}
\label{orthogonal2}
\E\{[x(t)-\hat{x}(t)]\bar{x}_+(t)'\}=0. 
\end{equation}
\end{subequations}
By Lemma~\ref{Plemma}, \eqref{orthogonal} yields 
\begin{align*}
   &P_- - L_-P_-- \bar{L}_+\bar{P}_+P_- =0  \\
    &\bar{P}_+ - L_-P_-\bar{P}_+ -\bar{L}_+\bar{P}_+=0, 
\end{align*}
which, in view of the fact that $P_-$ and $\bar{P}_+$ are positive definite, yields
\begin{subequations}\label{LminusLplusbar}
\begin{equation}
 L_- +\bar{L}_+\bar{P}_+ =I
\end{equation}
\begin{equation}
L_-P_- +\bar{L}_+=I
\end{equation}
\end{subequations}
Again by orthogonality and Lemma~\ref{Plemma},
\begin{displaymath}
\begin{split}
Q=\E\left\{\left(x-\hat{x}\right)x'\right\} =I-L_-P_--\bar{L}_+\bar{P}_+,
\end{split}
\end{displaymath}
which, in view of \eqref{LminusLplusbar} and the relations \eqref{QminusQplusbar}, yields
\begin{equation}
\label{Q2LminusLplus}
L_-=QQ_-^{-1}\quad \text{and}\quad \bar{L}_+=Q\bar{Q}_+^{-1}.
\end{equation}
Then \eqref{twofilterformula} follows from \eqref{xhatlinercomb} and  \eqref{Q2LminusLplus}.
To prove \eqref{Qformula} eliminate $\bar{L}_+$ in \eqref{LminusLplusbar} to obtain
\begin{displaymath}
L_-(I-P_-\bar{P}_+)=\bar{Q}_+,
\end{displaymath}
which together with \eqref{Q2LminusLplus} yields
\begin{displaymath}
Q^{-1}=Q_-^{-1}(I-P_-\bar{P}_+)\bar{Q}_+^{-1}.
\end{displaymath}
However, 
\begin{displaymath}
I-P_-\bar{P}_+=\bar{Q}_+ +Q_- - Q_-\bar{Q}_+,
\end{displaymath}
and hence  \eqref{Qformula} follows. 
\end{proof}

In the special case with no loss of observation this is a normalized version of the Mayne-Frazer two-filter formula \cite{Mayne-66,Fraser-P-69}, which however in \cite{Mayne-66,Fraser-P-69} was formulated in terms of $x_-$ and $x_+$ rather than $\bar{x}_+$, where $x_+$ is the state process of the forward stochastic system of the backward Kalman filter. (For the corresponding formula in terms of $x_-$ and  $\bar{x}_+$, see \cite{Badawi-L-P-79,LPbook}; also cf. \cite{wall1981fixed}, where an independent derivation was given.) With a single interval of loss of observation the formula \eqref{twofilterformula} reduces  to a version of the interpolation formulas in \cite{Pavon-84a}. The remarkable fact, discovered here, is that the same formula \eqref{twofilterformula} holds for any intermittent observations structure and by a cascade of continuous and discrete-time forward and backward Kalman filters, as needed depending on the assumed information pattern. 

\section{Recap of computational steps}\label{sec:recap}

Given a system \eqref{nonstat_csystforward} with state covariance \eqref{eq:Lyapunovdifferentialeq}, make the normalizing substitution
\begin{equation}
\label{substitution}
\begin{split}
&A(t)\leftarrow P(t)^{-\half} A(t) P(t)^{\half}+R(t)\\
&B(t) \leftarrow P(t)^{-\half} B(t)\\
&C(t)\leftarrow C(t)P(t)^{\half} 
\end{split}
\end{equation}
with $R(t)=\left[\frac{d\phantom{t}}{dt}P(t)^{-\half}\right]P(t)^{\half}$. Next, we compute the intermittent forward and backward Kalman filter estimates $x_-$ and $\bar{x}_+$, respectively,  along the lines of Section~\ref{Kalmansec}, where, due to the normalization, $Q_-(0)=\bar{Q}_+(T)=I_n$. Then the smoothing estimate is given by 
\begin{displaymath}
\hat{x}(t)=Q(t)\left(Q_-(t)^{-1}x_-(t)+\bar{Q}_+(t)^{-1}\bar{x}_+(t)\right),
\end{displaymath}
where 
\begin{displaymath}
Q(t)=\left(Q_-(t)^{-1}+\bar{Q}_+(t)^{-1}-I\right)^{-1}.
\end{displaymath}

\section{An example}\label{sec:example}

We now illustrate the results of the paper on a specific numerical example.
We consider the continuous-time diffusion process
\begin{eqnarray*}
dx_1(t) &=& x_2(t)dt\\
dx_2(t) &=& -0.3x_1(t)dt -0.7x_2(t)dt +dw(t)\\
dy(t)&=& x_1(t)dt + dv(t)
\end{eqnarray*}
where $w$ and $v$ are thought to be independent standard Wiener processes. Here, $x_1$ is thought of as position and $x_2$ as velocity of a particle that is steered by stochastic excitation in $dw$, in the presence of a restoring force $0.3x_1$ and frictional force $0.7x_2$. Then $dy/dt$ represents measurement of the position and $dv/dt$ represents measurement noise (white).

\begin{figure}[h]\begin{center}
\includegraphics[width=0.45\textwidth]{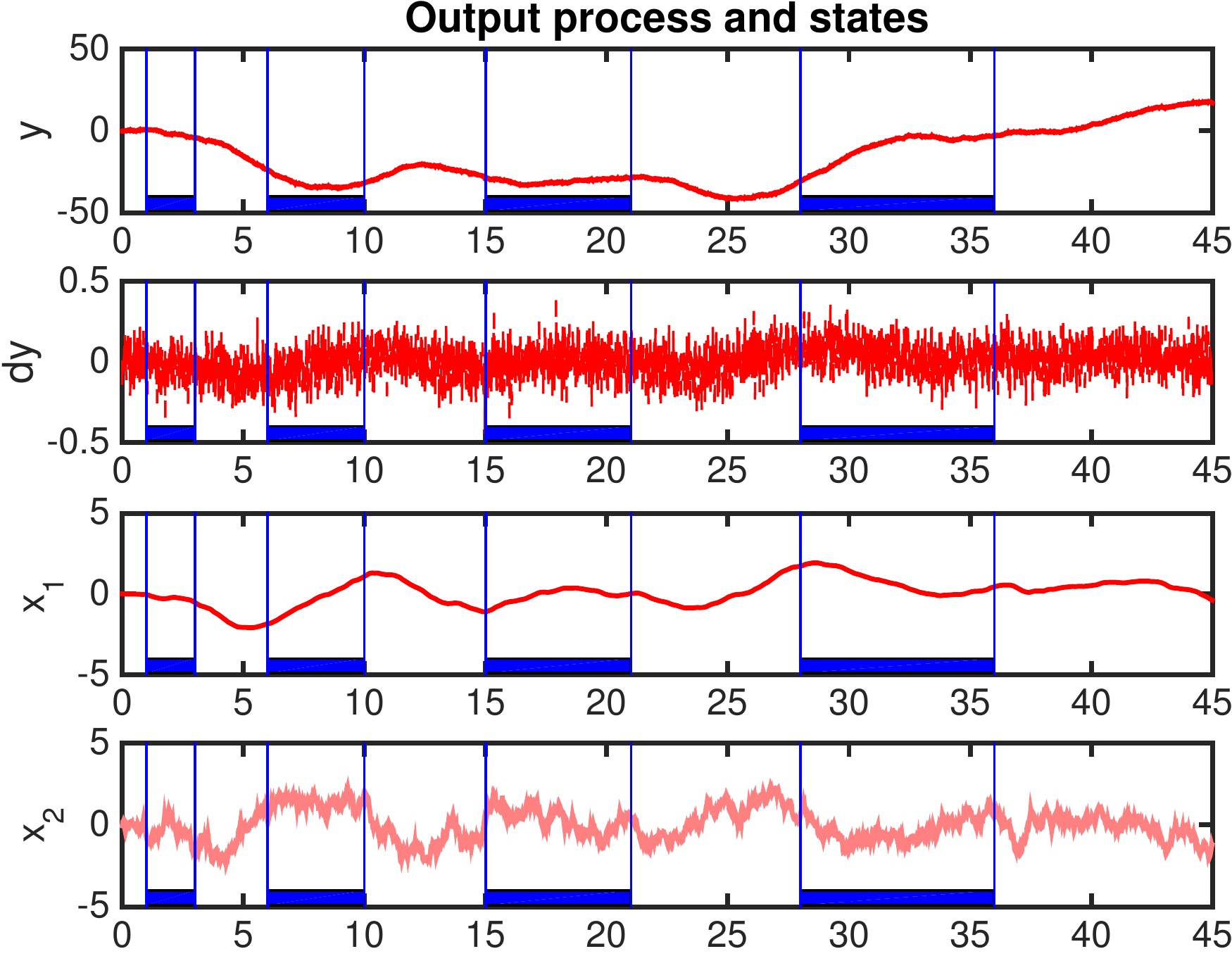}
   \caption{Sample paths of output process, increment, and state processes}
   \label{fig1}
\end{center}\end{figure}
\begin{figure}[h]\begin{center}
\includegraphics[width=0.45\textwidth]{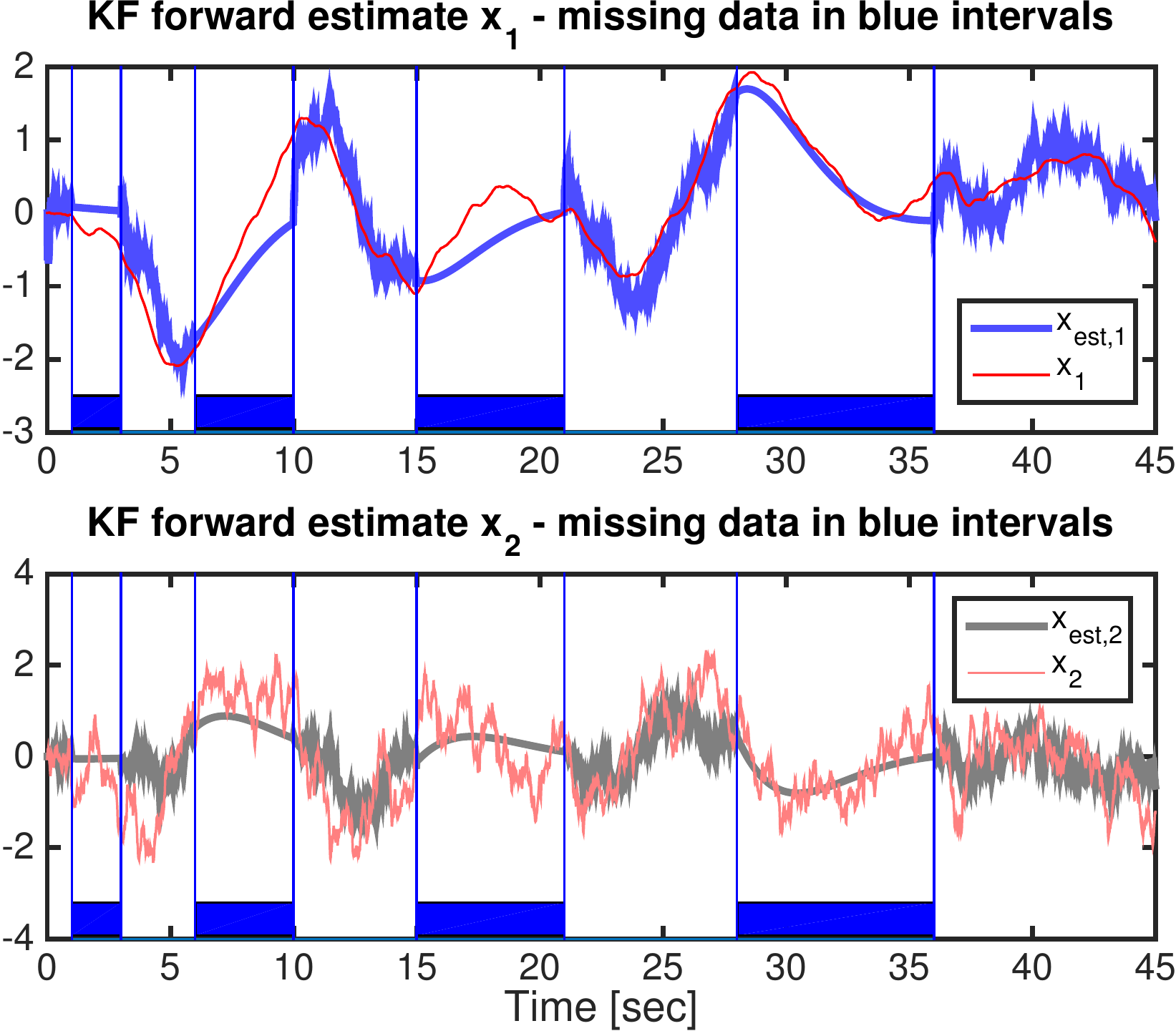}
   \caption{Kalman estimates in the forward time direction}
   \label{fig3}
\end{center}\end{figure}
Numerical simulation over $[0,T]$ with $T=45$ (units of time) produces a time-function $y(t)$ which is sampled with integer multiples of $\Delta t = 0.01$ (units). The interval $[0,T]$ is partitioned into
\[
[0,T]=\cup_{i=1}^9 [t_{i-1},t_i]
\]
where $t_0=0$ and $t_i-t_{i-1}=i$ (units). Measurements of $y$ are made available for purposes of state estimation over the intervals $[t_{i-1},t_i]$ for $i=1,3,5,9$. Over the complement set of intervals, data are not made available for state estimation; these intervals where data are not to be used are marked by a thick blue baseline in the figures. In Figure \ref{fig1} we display sample paths of the output process $y$, increments $dy$, and state-processes $x_1$ and $x_2$.

\begin{figure}[ht]\begin{center}
\includegraphics[width=0.45\textwidth]{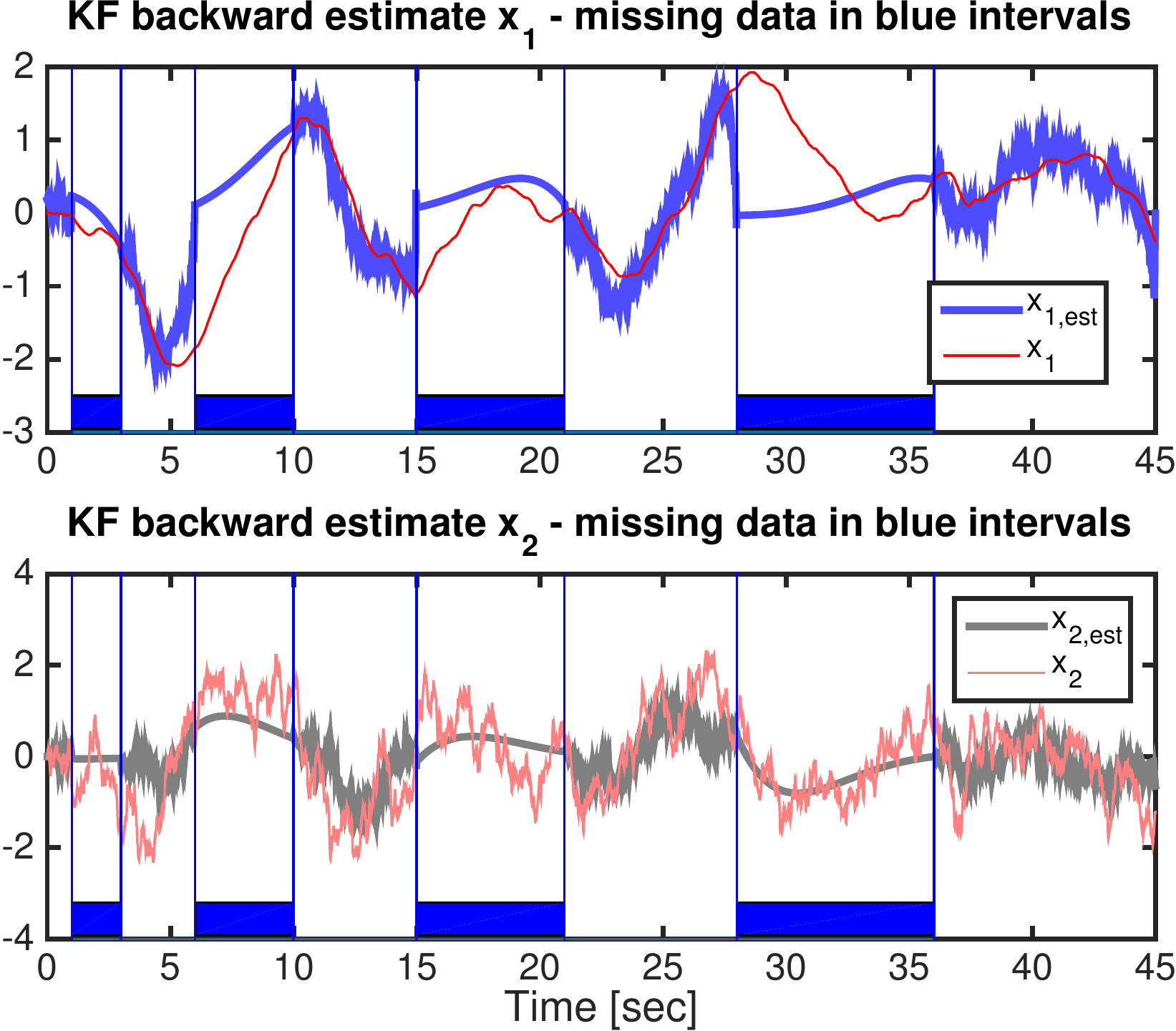}
   \caption{Kalman estimates in the backward time direction}
   \label{fig4}
\end{center}\end{figure}
\begin{figure}[h]\begin{center}
\includegraphics[width=0.45\textwidth]{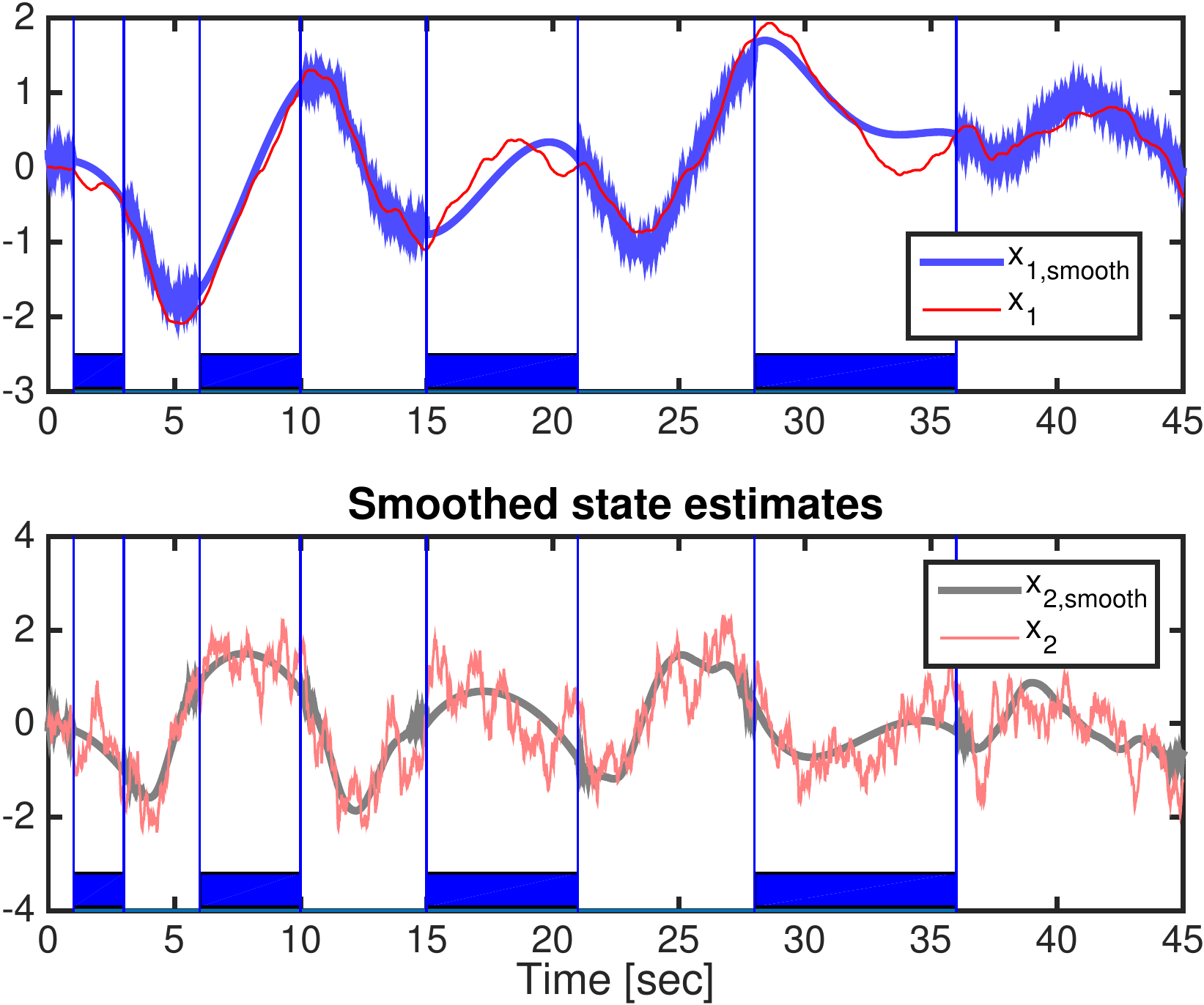}
   \caption{Interpolation/smoothed estimates by fusion of Kalman forward and backward estimates}
   \label{fig5}
\end{center}\end{figure}

The process increments $dy$ over $[t_{i-1},t_i]$ for $i=1,3,5,9$ as well as the increments $\Delta y$ across the $[t_{i-1},t_i]$ for $i=2,4,6,8$ are used in the two-filter formula for the purpose of smoothing. The Kalman estimates for the states in the forward and backwards in time directions, $x_-(t)$ and $\bar x_+(t)$
are shown in Figures \ref{fig3} and \ref{fig4}, respectively. The fusion of the two using \eqref{twofilterformula} is shown in Figure \ref{fig5}. It is worth observing the nature and fidelity of the estimates. In the forward direction, across intervals where data is not available, $x_-$ becomes increasing more unreliable whereas the opposite is true for $\bar x_+$, as expected.
The smoothing estimate is generally an improvement to those of the two Kalman filters as seen in Figure~\ref{fig5}.
In particular, it is worth noting  $x_2$ (in subplot 2), where, over windows of available observations, estimates have considerably less variance in the middle of the interval where the weights ($Q(t)Q_-(t)^{-1}$ and $Q(t)\bar Q_+(t)^{-1}$) in \eqref{twofilterformula} are equalized, whereas sample paths become increasing rugged at the two ends where one of the two Kalman estimates has significantly higher variance, and the corresponding mixing coefficient becomes relatively smaller.

\section{Concluding remarks}\label{sec:conclusions}

Historically the problem of interpolation has been considered from the beginning of the study of stochastic processes
\cite{kolmogorov1978stationary,yaglom1949problems}. Early accounts and treatments were cumbersome and non-explicit
as the problem was considered difficult \cite{karhunen1952interpolation,rozanovstationary,masani1971review,dym2008gaussian}. In a manner that echoes the development of Kalman filtering, the problem became transparent and computable for ouput processes of linear stochastic systems \cite{Pavon-84,Pavon-84a,LPbook}.

This paper builds on developments in stochastic realization theory \cite{Lindquist-P-79,pavon1980stochastic} and presents a unified and generalized two-filter formula for smoothing and interpolation in continuous time  for the case of intermittent availability of data over an operating window. The analysis considers two alternative information patterns where increments of the output process or the output process itself is recorded when information becomes available. The second information pattern appears most natural to us in this continuous-time setting, and this is our main problem. Nevertheless, in either case, two Kalman filters run in opposite time-directions, designed on the basis of a forward and a backward model for the process, respectively. Fusion of the respective estimates is effected via linear mixing in a manner similar to the Mayne-Fraser formula and applies to both smoothing and interpolation intermixed. In earlier works, smoothing and interpolation have been considered separate problems \cite[Chapter 15]{LPbook}. The balancing normalization also simplifies the mixing formula and makes it completely time symmetric. 

The theory relies on time-reversal of stochastic models. We provide a new derivation of such a reversal which has the convenient property of being balanced. It is based on lossless imbedding of linear systems and effects the time reversal through a unitary transformation.
Interestingly, time symmetry in statistical and physical laws
have occupied some of the most prominent minds in science and mathematics.
In particular, closer to our immediate interests, dual time-reversed models have been employed to model, in different time-directions, Brownian or Schr\"odinger bridges \cite{pavon1991free}, \cite{dai1990markov}, a subject which is related to reciprocal processes \cite{jamison1974reciprocal}, \cite{Krener-86}, \cite{levy1990modeling}, \cite{dai1991stochastic}.
A natural extension of the present work in fact is in the direction of general reciprocal dynamics \cite{Krener-86,levy1990modeling} and the question of whether similar two-filter formula are possible.

\section*{Appendix: Time reversal of non-stationary discrete-time systems}

Next, instead of \eqref{eq:model_discrete}, consider the non-stationary state dynamics
\begin{align}\label{eq:model_discrete_nonstationary}
x(t+1)=A(t)x(t)+B(t)w(t), \quad x(0)=x_0, 
\end{align}
on a finite time-window $[0,T]$, where, for simplicity we now assume that the covariance matrix $P_0:=P(0)$ of the zero-mean stochastic vector $x_0$  is positive definite, i.e., $P_0=\E\{x_0x_0'\}>0$. 
Then the state covariance matrix $P(t):=\E\{x(t)x(t)^\prime\}$ will satisfy the Lyapunov difference equation
\begin{align}\label{discreteLyapunov_diff}
P(t+1)= A(t)P(t)A(t)^\prime + B(t)B(t)^\prime.
\end{align}
The state transformation 
\begin{align}\label{eq:statetransform_nonstationary}
\xi(t)= P(t)^{-\half} x(t)
\end{align}
brings the system \eqref{eq:model_discrete_nonstationary} into the form
\begin{align}\label{eq:model_discrete2_nonstationary}
\xi(t+1)=F(t)\xi(t)+G(t)w(t),
\end{align}
where now $\E\{\xi(t)\xi(t)^\prime\}=I_n$ for all $t$ and
\begin{subequations}\label{eq:AB2FG_discrete}
\begin{align}
F(t)&=P(t+1)^{-\half} A(t) P(t)^{\half},\\G(t)&=P(t+1)^{-\half} B .
\end{align}
\end{subequations}
The Lyapunov difference equation then reduces to 
\begin{align}
I_n=F(t)F(t)^\prime + G(t)G(t)^\prime
\end{align}
allowing us to embed $[F,G]$  as part of a time-varying orthogonal matrix
\begin{align}\label{eq:U(t)}
U(t)=\left[\begin{array}{cc} F(t)& G(t)\\ H(t)& J(t)\end{array}\right] .
\end{align}
This amounts to extending \eqref{eq:model_discrete2_nonstationary} to 
\begin{subequations}\label{xisystem_nonstationary}
\begin{align}
\xi(t+1)&=F(t)\xi(t)+G(t)w(t)\\
\bar w(t)&=H(t)\xi(t)+J(t)w(t),
\end{align}
\end{subequations}
or, in the equivalent form
\begin{equation}\label{eq:xiudynamics}
\begin{bmatrix}\xi(t+1)\\\bar{w}(t)\end{bmatrix}=U(t)\begin{bmatrix}\xi(t)\\w(t)\end{bmatrix}. 
\end{equation}
Hence, since $\E\{\xi(t)\xi(t)^\prime\}=I_n$ and $\E\{w(t)w(t)^\prime\}=I_p$,
and assuming that $\E\left\{\xi(t)w(t)^\prime\right\}=0$,
\begin{equation}
\label{ }
\E\left\{\begin{bmatrix}\xi(t+1)\\\bar{w}(t)\end{bmatrix}\begin{bmatrix}\xi(t+1)\\\bar{w}(t)\end{bmatrix}^\prime\right\}= U(t)U(t)^\prime =I_{n+p}, 
\end{equation}
which yields 
\begin{subequations}\label{eq:backward_correlations}
\begin{align}
    &\E\{\xi(t+1) \bar{w}(t)^\prime\}=0,   \\
    &\E\{\bar{w}(t) \bar{u}(t)^\prime\}=I_p .
\end{align}
\end{subequations}
Moreover, from \eqref{xisystem_nonstationary} we have 
\begin{displaymath}
\begin{split}
&\bar{u}(t+k)=H(t+k)\Phi(t+k,t)\xi(t)\\&+\sum_{j=t}^{t+k-1}H(t+k)\Phi(t+k,j+1)G(j)w(j) +J(t)w(t)
\end{split}
\end{displaymath}
for $k>0$, where 
\begin{displaymath}
\Phi(s,t)=
\begin{cases}
F(s-1)F(s-2)\cdots F(t)\quad \text{for $s>t$}\\
I_n\quad \text{for $s=t$}.
\end{cases}
\end{displaymath}
Therefore, since $F(t)H(t)'+G(t)J(t)'=0$ by the unitarity of $U(t)$,
\begin{displaymath}
\begin{split}
&\E\{\bar{u}(t+k)\bar{u}(t)'\}\\& =H(t+k)\Phi(t+k,t+1)[F(t)H(t)'+G(t)J(t)']=0. 
\end{split}
\end{displaymath}
Consequently, $\bar{u}$ is a white noise process. Finally,
premultiplying \eqref{eq:xiudynamics} by $U(t)^\prime$, we then obtain
 \begin{subequations}\label{xisystem_nonstationary_backward}
\begin{align}
\xi(t)&=F(t)'\xi(t+1)+H(t)'\bar{w}(t)\\
w(t)&=G(t)'\xi(t+1)+J(t)'\bar{w}(t),
\end{align}
\end{subequations}
which, in view of \eqref{eq:backward_correlations}, is a backward stochastic system. 

Using the transformation \eqref{eq:statetransform_nonstationary}, \eqref{xisystem_nonstationary} yields the forward representation
\begin{subequations}\label{eq:forwardunitary}
\begin{align}
x(t+1)&=A(t)x(t)+B(t)w(t)\\
\bar w(t)&=\bar{B}(t)'x(t)+J(t)w(t),
\end{align}
\end{subequations}
where $\bar{B}(t):=P(t)^{-\half}H(t)'$. Likewise  \eqref{xisystem_nonstationary_backward} and
\begin{equation}
\label{eq:x2xbar}
\bar{x}(t)=P(t+1)^{-1}x(t+1),
\end{equation}
yields the backward representation
\begin{subequations}
\begin{align}\label{eq:inversexsystema}
\bar{x}(t-1)&=A(t)^\prime\bar{x}(t)+\bar B(t) \bar w(t)\\
w(t)&=B(t)^\prime \bar{x}(t)+J(t)^\prime \bar w(t) . \label{eq:inversexsystemb}
\end{align}
\end{subequations}

\begin{remark}
When considered on the doubly infinite time axis,
equation \eqref{eq:xiudynamics} defines an isometry. Indeed,
assuming that the input is squarely summable, the fact that $U(t)$ is unitary for all $t$ directly implies that
\[
\sum_{-\infty}^N\|\bar w\|^2 + \|\xi(t+1)\|^2 = \sum_{-\infty}^N\|w(t)\|^2.
\]
Then, $\xi(t)\to 0$ as $t\to \infty$, provided $\Phi(t,s)\to 0$ as $s\to -\infty$.
It follows that
\[\sum_{t=-\infty}^\infty\|\bar w(t)\|^2=\sum_{t=-\infty}^\infty\|w(t)\|^2.
\]
\end{remark}

We are now in a position to derive a backward version of a non-stationary stochastic system
\begin{subequations}\label{dsystforward_nonstationary}
\begin{align}
&x(t+1)=A(t)x(t)+B(t)w(t),\quad x(0)=x_0  \label{nonstat_dsystforwarda}\\
&\phantom{xxll}y(t)=C(t)x(t)+D(t)w(t) \label{nonstat_dsystforwardb}
\end{align}
\end{subequations}
where $x_0$  and the normalized white-noise process $w$ are uncorrelated and $\E\{x_0x_0'\}=P_0$. 
In fact, inserting the transformations  \eqref{eq:x2xbar} and \eqref{eq:inversexsystema} into \eqref{nonstat_dsystforwardb} yields
\begin{displaymath}
y(t)=\bar{C}\bar{x}(t) +\bar{D}\bar{w}(t),
\end{displaymath}
where
\begin{align}
 \bar{C}   & =C(t)P(t)A(t)'+D(t)B(t)' \\
  \bar{D}  & =C(t)P(t)\bar{B}(t) + D(t)J(t)' 
\end{align}
From that we have the backward system 
\begin{subequations}\label{nonstat_dsystbackward}
\begin{align}
&\bar{x}(t-1)=A(t)'\bar{x}(t)+\bar{B}(t)\bar{w}(t) \label{nonstat_dsystbackwarda}\\
&\phantom{xxx}y(t)=\bar{C}(t)\bar{x}(t)+ \bar{D}(t)\bar{w}(t) \label{nonstat_dsystbackwardb}
\end{align}
\end{subequations}
with the boundary condition $\bar{x}(T-1)=P(T)^{-1}x(T)$ being uncorrelated to the white-noise process $\bar{w}$.

\bibliographystyle{IEEEtran}
\bibliography{ifacconf}    

%
%
\end{document}